\newtheorem{theorem}{Theorem}[section]
\newtheorem{lemma}[theorem]{Lemma}
\newtheorem{corollary}[theorem]{Corollary}
\newtheorem{proposition}[theorem]{Proposition}
\theoremstyle{definition}
\newtheorem{remark}[theorem]{Remark}
\newtheorem{definition}[theorem]{Definition}
\newtheorem{example}[theorem]{Example}
\newtheorem{remark/example}[theorem]{Remark/Example}
\let\oldlabel=\label
\def\prellabel{\marginparsep=1em\marginparwidth=44pt
 \def\label##1{\oldlabel{##1}\ifmmode\else\ifinner\else
 \marginpar{{\footnotesize\ \\ \tt
 ##1}}\fi\fi}}
\numberwithin{equation}{section}
\def\PP{ {\bf P} }
\def\NN{ {\bf N} }
\def\YY{ {\bf Y} }
\def\ZZ{ {\bf Z} }
\newcommand{\Rees}{\operatorname{Rees}}
\newcommand{\ini}{\operatorname{in}}
\newcommand{\GL}{\operatorname{GL}}
\newcommand{\mm}{\operatorname{{\mathbf m}}}
\newcommand{\depth}{\operatorname{depth}}
\newcommand{\height}{\operatorname{height}}
\newcommand{\grade}{\operatorname{grade}}
\newcommand{\Tor}{\operatorname{Tor}}
\newcommand{\projdim}{\operatorname{projdim}}
\newcommand{\reg}{\operatorname{reg}}
\newcommand{\Ass}{\operatorname{Ass}}
\newcommand{\Min}{\operatorname{Min}}
\numberwithin{equation}{section}
\begin{document}
\title{Maximal minors and linear powers}
\author{Winfried Bruns}
\address{Universit\"at Osnabr\"uck, Institut f\"ur Mathematik, 49069 Osnabr\"uck, Germany}
\email{wbruns@uos.de}
\author{Aldo Conca}
\address{ Dipartimento di Matematica,
Universit\`a degli Studi di Genova, Italy} \email{conca@dima.unige.it}
\author{Matteo Varbaro}
\address{Dipartimento di Matematica,
Universit\`a degli Studi di Genova, Italy}
\email{varbaro@dima.unige.it}
\subjclass[2000]{13D02, 13C40, 14M12}
\keywords{linear resolution, Castelnuovo-Mumford regularity, syzygies, rational normal scrolls }
\date{}
\begin{abstract}
An ideal $I$ in a polynomial ring $S$ has linear powers if all the powers $I^k$ of $I$ have a linear free resolution. 
We show that the ideal of maximal minors of a sufficiently general matrix with linear entries has linear powers.
The required genericity is expressed in terms of the heights of the ideals of lower order minors. In particular we prove that every rational normal scroll has linear powers.
 \end{abstract}
\maketitle

\section{Introduction}
When $I$ is a homogeneous ideal in a polynomial ring $S$, it is known from work of Cutkosky, Herzog, Trung \cite{CHT} and Kodiyalam \cite{K} that the Castelnuovo-Mumford regularity of $I^k$ is asymptotically a linear function in $k$. Many authors have studied the function $\reg(I^k)$ from various perspectives, see for instance the recent papers of Eisenbud and Harris \cite{EHa1} and Chardin \cite{Ch}. This function behaves in the simplest possible way when $I$ is generated by forms of a given degree, say $d$, and all its powers have a linear resolution, i.e.~$\reg (I^k)=dk$ for all $k$. We term ideals with this property \emph{ideals with linear powers}. Similarly we say that a projective variety has linear powers when its defining ideal has linear powers.

The rational normal scrolls are important projective varieties. They have both a toric and a determinantal presentation and play a prominent role in the Bertini-Del Pezzo classification theorem of irreducible varieties of minimal degree, see the ``centennial account" of Eisenbud and Harris \cite{EHa} for details. A rational normal scroll of dimension $n$ is uniquely determined by a sequence of positive integers $a_1,\dots,a_n$, see \cite{Ha}. It is balanced if $|a_j-a_i| \leq 1$ for all $i,j$. In \cite{CHV} Conca, Herzog and Valla showed that the Rees algebra of a balanced rational normal scroll is defined by a Gröbner basis of quadrics and hence it is a Koszul algebra. It follows then from a result of Blum, see \cite{B} or Proposition \ref{rees}, that balanced rational normal scrolls have linear powers. Herzog, Hibi and Ohsugi ask in \cite{HHO} whether the same is true for every rational normal scroll.

One can ask the same question for irreducible varieties of minimal degree. However, apart from the rational normal scrolls, they are either quadric hypersurfaces (for which the question is trivial) or the Veronese embedding $\PP^2\to \PP^5$. The latter can be treated by ad hoc methods (see \cite{HHO} or Proposition \ref{Ver}). So the question is really open only for the rational normal scrolls. For them one could try to prove that the associated Rees algebra is Koszul (for example by exhibiting a Gröbner basis of quadrics for their defining ideals) but, despite to many efforts, there was no progress in this direction.

The rational normal scrolls are determinantal; they are defined by the $2$-minors of a $2\times n$ matrix with linear entries and expected height. The main result of the paper asserts that the ideal maximal minors of a $m\times n$ matrix $X$ of linear forms has linear powers provided the ideals of minors satisfy the following inequalities:
\begin{itemize}
\item[(1)] $\height I_m(X) \geq n-m+1,$
\item[(2)] $\height I_j(X)\geq \min\{ (m+1-j)(n-m)+1,N \}$ for every $j=2,\dots,\allowbreak m-1$,
\end{itemize}
where $N=\height I_1(X)$, see Theorem \ref{mainThm}. We prove also that, under the above height conditions, the Rees algebra of $I_m(X)$ is of fiber type and the Betti numbers of $I_m(X)^k$ depend only on the size of the matrix, the exponent $k$ and $N$. As a corollary, we answer the question of Herzog, Hibi and Ohsugi positively.
In Section \ref{xReg} we introduce the main notions and technical terms. We give a characterization of ideals with linear powers in terms of the $(1,0)$-regularity of their Rees algbera, see Theorem \ref{palombo}.

The computations that led to the discovery of the theorems and examples presented in this paper have been performed with CoCoA\cite{Cocoa}. Our work was partly supported by the 2011-12 Vigoni project ``Commutative algebra and combinatorics".

\section{Ideals with linear powers and their Rees algebras}
\label{xReg}

Let $S$ be the polynomial ring $K[x_1,\dots, x_n]$ over an infinite field $K$ with maximal homogeneous ideal $\mm_S=(x_1,\dots,x_n)$ , and let $I\subset S$ be a homogeneous ideal. The Castelnuovo-Mumford regularity of a graded $S$-module $M$ is denoted by $\reg(M)$.

\begin{definition} We say that $I$ has linear powers if all the powers of $I$ have a linear resolution. In other words, all the generators of $I$ have the same degree, say $d$, and $\reg(I^k)=kd$ for every $k\in \NN$.
\end{definition}
Examples of ideals with linear powers are 
\begin{itemize}
\item[(i)]  strongly stable monomial ideals generated in one degree,
\item[(ii)] products of ideals of linear forms, 
\item[(iii)] polymatroidal ideals, 
\item[(iv)] ideals with a linear resolution and dimension $\leq 1$,
\end{itemize} 
see Conca and Herzog \cite{CH}. Also, in \cite{HHZ} Herzog, Hibi and Zheng proved that monomial ideals generated in degree $2$ have linear powers as soon as they have a linear resolution.
In general however ideals with a linear resolution need not have linear powers; see Conca \cite{C} for a list of examples.

Properties of the powers of an ideal can often be expressed in terms of the Rees algebra of the ideal itself. The goal of this section is the discussion of properties of the Rees algebra of $I$ that are related to $I$ having linear powers.

Suppose that $I$ is generated by elements of degree $d$. The Rees algebra
$$
\Rees(I)=\bigoplus_{j\in \ZZ}I^j
$$
has a $\ZZ^2$-graded structure with
$$
\Rees(I)_{(i,j)}=(I^j)_{jd+i}.
$$
It is standard graded in the sense that the $K$-algebra generators of $\Rees(I)$ live in degree $(1,0)$ (the elements of $S_1$) and in degree $(0,1)$ (the elements of $I_d$). Let $m$ be the minimal number of generators of $I$ and consider the polynomial ring $A=S[y_1,\dots,y_m]=K[x_1,\dots, x_n,y_1,\dots,y_m]$, bigraded by setting $\deg x_i=(1,0)$ and $\deg y_j=(0,1)$ for $i=1,\dots,n$ and $j=1,\dots,m$. We have a $\ZZ^2$-graded presentation
$$
A/P(I)\cong \Rees(I).
$$

Let $Q(I)$ be the subideal of $P(I)$ generated by elements of degree $(*,1)$. By construction, $Q(I)$ defines the symmetric algebra $S(I)$ of $I$. The ideal $I$ is said to be of \emph{ linear type} if $P(I)=Q(I)$, i.e. if $\Rees(I)=S(I)$.
The fiber cone $F(I)$ of $I$ is, by definition,
$$
F(I)=\Rees(I)/\mm_S\Rees(I) \cong K[I_d]\subset S
$$
and can be presented as
$$
F(I)\cong K[y_1,\dots,y_m]/T(I)
$$
where $T(I)=P(I)\cap K[y_1,\dots,y_m]$. The ideal $I$ is said to be of \emph{ fiber type} if $P(I)=Q(I)+T(I)$.  Here we are identifying $T(I)$ with its extension to $A$.  In other words, $I$ is of fiber type if the ideal $P(I)$ has no minimal generators of degree $(a,b)$ with $a>0$ and $b>1$.

For a bigraded $A$-module $M$ let $\beta_{i,(a,b)}^A(M)$ denote the Betti number of $M$ corresponding to homological position $i$ and degree $(a,b)$. The $(1,0)$-regularity (also called $x$-regularity in \cite{ACD,HHO,R}) of $M$ is, by definition,
$$
\reg_{(1,0)}(M)=\sup \{ a-i : \beta_{i,(a,b)}^A(M)\neq 0\mbox{ for some } b\}.
$$
Furthermore $P_A(M)$ denotes the multigraded  Poincaré series of $M$, that is,
$$
P_A(M)=\sum \beta_{i,(a,b)}^A(M) x^i z^as^b
$$
We have the following result of Blum \cite[Cor.3.6]{B}:

\begin{proposition}
\label{rees}
If $\Rees(I)$ is Koszul, then $I$ has linear powers.
\end{proposition}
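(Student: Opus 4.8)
The plan is to reduce the linearity of the resolutions of all the powers $I^k$ to the vanishing of a single invariant of the Rees algebra, its $(1,0)$-regularity over the bigraded polynomial ring $A$, and then to use the Koszul hypothesis to establish that vanishing. Since $I$ is generated in one degree $d$, the power $I^k$ is generated in degree $kd$ and so $\reg(I^k)\ge kd$ automatically; hence $I$ has linear powers if and only if $\reg(I^k)\le kd$ for every $k$, and this is all that remains to prove.

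First I would extract resolutions of the powers from a single resolution of $\Rees(I)$. Fix a minimal $\ZZ^2$-graded free resolution $F_\bullet\to\Rees(I)$ over $A$, with $F_i=\bigoplus_{(a,b)}A(-a,-b)^{\beta^A_{i,(a,b)}}$. Because $A=S[y_1,\dots,y_m]$ is free over $S$ and the $y_j$ have degree $(0,1)$, passing to the bidegrees whose second coordinate equals $k$ is exact, so $(F_\bullet)_{(*,k)}$ is a (generally non-minimal) free $S$-resolution of $\Rees(I)_{(*,k)}\cong I^k(kd)$. Each summand $A(-a,-b)$ contributes in $y$-degree $k$ only copies of $S(-a)$ (and nothing once $b>k$), so every generator in homological degree $i$ of this $S$-resolution lies in $S$-degree $a$ for some $(a,b)$ with $\beta^A_{i,(a,b)}\neq 0$ and $b\le k$. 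Reading off degrees gives
$$\reg_S\big(\Rees(I)_{(*,k)}\big)\le \reg_{(1,0)}(\Rees(I)),$$
which after the shift by $kd$ is exactly
$$\reg(I^k)\le kd+\reg_{(1,0)}(\Rees(I))\qquad\text{for all }k.$$
Thus the whole statement reduces to proving $\reg_{(1,0)}(\Rees(I))=0$; this is the direction of the characterization announced in Theorem \ref{palombo} that we actually need.

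The step that uses the Koszul hypothesis is therefore the claim that a standard bigraded Koszul algebra $R=A/P$ satisfies $\reg_{(1,0)}(R)=0$, i.e.\ $\beta^A_{i,(a,b)}(R)=0$ whenever $a>i$. This is genuinely bigraded: the ordinary regularity $\reg_A(R)$ of a Koszul algebra need not vanish, so the $x$- and $y$-variables must be treated asymmetrically. Here I would compare the minimal $A$-free resolution of $R$ with the minimal $R$-free resolution of the residue field, which is \emph{linear} exactly because $R$ is Koszul, so that $\beta^R_{p,(c,e)}(K)\neq 0$ forces $c+e=p$. Concretely one runs the change-of-rings spectral sequence
$$\Tor^R_p\big(K,\Tor^A_q(R,K)\big)\Longrightarrow \Tor^A_{p+q}(K,K),$$
whose abutment is the Koszul complex of $A$ and hence is concentrated in total degree equal to homological degree. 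A putative Betti number of $R$ with $x$-defect $a-i>0$ cannot be matched on the abutment side, so it must be cancelled by a differential, and the Koszul constraint $c+e=p$ on the second tensor factor is what pins down where such cancellations can occur.

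I expect this last point to be the main obstacle. A single differential only trades an $x$-defect $a-i$ at homological degree $i$ for one of defect $a-i-1$ at a strictly smaller homological degree, so the vanishing is not a one-step contradiction; it has to be teased out by a descending analysis of the defect together with the linearity constraint coming from Koszulness, and one must argue that the relevant terms genuinely do not cancel. This delicate bookkeeping is precisely the content of Blum's result \cite[Cor.~3.6]{B} on bigraded Koszul algebras. Once $\reg_{(1,0)}(\Rees(I))=0$ is established, the estimate of the second paragraph gives $\reg(I^k)=kd$ for all $k$ and the proposition follows.
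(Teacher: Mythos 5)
Your second paragraph is correct and complete as far as it goes: taking the $y$-degree $k$ strand of a minimal bigraded $A$-free resolution of $\Rees(I)$ gives a (possibly non-minimal) $S$-free resolution of $I^k(kd)$, whence $\reg(I^k)\le kd+\reg_{(1,0)}(\Rees(I))$; this is exactly R\"omer's direction of Theorem \ref{palombo}, and your argument for it is fine. The gap is in the step that is supposed to use the Koszul hypothesis. The claim you reduce everything to --- that \emph{every} standard bigraded Koszul algebra $R=A/P$ satisfies $\reg_{(1,0)}(R)=0$ --- is false. Take $R=K[x,y]/(x^2)$ with $\deg x=(1,0)$ and $\deg y=(0,1)$: it is standard bigraded and Koszul (it is $K[x]/(x^2)\otimes_K K[y]$), yet $\beta^A_{1,(2,0)}(R)\neq 0$, so $\reg_{(1,0)}(R)=1$. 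The same example shows that the cancellation you hope to rule out in the change-of-rings spectral sequence genuinely occurs: the offending class in $\Tor^A_1(R,K)$ of bidegree $(2,0)$ is killed by $d_2$ coming from $\Tor^R_2(K,K)$, which, consistently with Koszulness (total degree $2$), has a nonzero component in bidegree $(2,0)$. So no amount of bookkeeping can close your argument at this level of generality; any correct proof must invoke the specific structure of the Rees algebra --- namely that its $(*,0)$-strand is the polynomial ring $S$ itself, equivalently that $P(I)$ has no component in bidegrees $(*,0)$ --- a property the counterexample violates and your sketch never uses.

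This also makes the appeal to Blum \cite[Cor.3.6]{B} circular rather than a citable black box: that corollary is not a general statement about $(1,0)$-regularity of bigraded Koszul algebras, it is precisely the statement of Proposition \ref{rees} (if $\Rees(I)$ is Koszul then all powers of $I$ have linear resolutions), which is why the paper quotes it for this proposition and gives no further proof. Your proposal therefore amounts to a correct proof of the reduction (R\"omer's half of Theorem \ref{palombo}) followed by a citation of the very result being proved. What is missing is the actual content of Blum's argument: that for a bigraded Koszul algebra whose $(*,0)$-strand is a polynomial ring (for instance a Rees algebra), the strands $R_{(*,k)}$ have linear resolutions over that strand. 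That statement is not a formal spectral-sequence consequence of Koszulness alone, and it is exactly the point your proof leaves open.
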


The converse of Proposition \ref{rees} does not hold in general, see Examples \ref{ex1}, \ref{ex2} and \ref{ex3} below.
We will make use of the following well-known fact, see for instance \cite[Prop.1.2]{CH}:

\begin{lemma}
\label{reg}
Let $M$ be a finitely generated graded $S$-module and $x\in S_1$ such that $0:_Mx$ has finite length. Set $a_0=\sup\{ j : H_{\mm}^0(M)_j\neq 0\}$. Then
$\reg(M)=\max \{ \reg(M/xM), a_0\}$.
\end{lemma}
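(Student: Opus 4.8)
The plan is to translate everything into local cohomology. Writing $a_i(M)=\sup\{j:H^i_{\mm}(M)_j\neq 0\}$, so that the $a_0$ of the statement is exactly $a_0(M)$, one has $\reg(M)=\max_i\{a_i(M)+i\}$, and the assertion becomes a comparison between the invariants $a_i(M)$ and $a_i(M/xM)$. I would derive this comparison from the multiplication map $x\colon M(-1)\to M$, whose kernel is $(0:_Mx)(-1)$ and whose cokernel is $M/xM$. The role of the hypothesis is then transparent: since $K:=0:_Mx$ has finite length, $H^i_{\mm}(K)=0$ for all $i\geq 1$, so the kernel only perturbs $H^0$ and is invisible to the higher local cohomology.

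Concretely, I would split the four-term sequence $0\to K(-1)\to M(-1)\xrightarrow{x}M\to M/xM\to 0$ into the two short exact sequences $0\to K(-1)\to M(-1)\to xM\to 0$ and $0\to xM\to M\to M/xM\to 0$. The long exact sequence of the first, together with $H^{\geq 1}_{\mm}(K)=0$, gives $H^i_{\mm}(xM)\cong H^i_{\mm}(M)(-1)$ for $i\geq 1$, and under this identification the inclusion $xM\hookrightarrow M$ induces multiplication by $x$ on cohomology. Feeding this into the long exact sequence of the second short exact sequence produces, for every $i\geq 0$ and every $j$, an exact sequence
$$H^i_{\mm}(M)_j\to H^i_{\mm}(M/xM)_j\to H^{i+1}_{\mm}(M)_{j-1}\xrightarrow{x}H^{i+1}_{\mm}(M)_j.$$
This single display is the engine for both inequalities.

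The inequality $\max\{\reg(M/xM),a_0\}\le\reg(M)$ is then immediate: $a_0\le\reg(M)$ by definition, while if $H^i_{\mm}(M/xM)_j\neq 0$ then exactness forces $H^i_{\mm}(M)_j\neq 0$ or $H^{i+1}_{\mm}(M)_{j-1}\neq 0$, whence $i+j\le a_i(M)+i$ in the first case and $i+j\le a_{i+1}(M)+(i+1)$ in the second, so $i+j\le\reg(M)$ in all cases.

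For the reverse inequality I set $r=\max\{\reg(M/xM),a_0\}$ and aim at $H^i_{\mm}(M)_j=0$ for all $j>r-i$. Reading the display one step down in homological degree gives $H^{i-1}_{\mm}(M/xM)_{j+1}\to H^i_{\mm}(M)_j\xrightarrow{x}H^i_{\mm}(M)_{j+1}$, and when $j>r-i$ the left-hand term vanishes because $j+1>r-(i-1)\geq a_{i-1}(M/xM)$; hence $x$ acts injectively on $H^i_{\mm}(M)_j$. Since $H^i_{\mm}(M)$ is Artinian and therefore zero in all sufficiently high degrees, iterating the injections $H^i_{\mm}(M)_j\hookrightarrow H^i_{\mm}(M)_{j+1}\hookrightarrow\cdots$ forces $H^i_{\mm}(M)_j=0$, as wanted (the case $i=0$ being covered by $a_0\le r$). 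I expect this last step to be the crux: converting ``$x$ is injective in high degree'' into outright vanishing via Artinianness, and matching the degree threshold correctly to $\reg(M/xM)$, is precisely where the finite-length hypothesis on $0:_Mx$ is spent.
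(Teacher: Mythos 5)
Your proof is correct. Note, however, that the paper does not contain its own proof of this lemma: it is quoted as a well-known fact with a pointer to \cite[Prop.~1.2]{CH}, so there is no internal argument to compare against. Your argument is precisely the standard proof of that cited result: split $0\to(0:_Mx)(-1)\to M(-1)\xrightarrow{\ x\ }M\to M/xM\to 0$ into two short exact sequences, use the finite length of $0:_Mx$ to get $H^i_{\mm}(xM)\cong H^i_{\mm}(M)(-1)$ for $i\geq 1$, and then combine the resulting four-term exact sequences with the eventual vanishing of graded local cohomology to convert injectivity of multiplication by $x$ in high degrees into outright vanishing. All the steps check out, including the degree bookkeeping in both inequalities. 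One small correction of emphasis in your closing remark: the finite-length hypothesis on $0:_Mx$ is spent where you establish the four-term display (killing $H^{i}_{\mm}(0:_Mx)$ for $i\geq 1$), not in the final step; the passage from high-degree injectivity to vanishing uses only that $H^i_{\mm}(M)_j=0$ for $j\gg 0$, which holds for every finitely generated graded module.
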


Let $z\in S$ and set $J=(I+(z))/(z)\subset R=S/(z)$. We have a short exact sequence:
$$0\to W\to \Rees(I)/z\Rees(I)\to \Rees(J)\to 0 \eqno{(1)}$$
where
$$W=\bigoplus_{k\in \NN} (I^k\cap (z))/zI^k=\bigoplus_{k\in \NN} z(I^k:z)/zI^k.$$

We have:

\begin{lemma}
\label{palo}
 Assume that $I$ has linear powers. Let $z$ be a linear form in $S$ such that $(I^k:z)/I^k$ has finite length for all $k$ (a general linear form has this property). Set $J=(I+(z))/(z)\subset R=S/(z)$. Then:
\begin{itemize}
\item[(1)] $J$ has linear powers and the Betti numbers (over $R$) of the powers of $J$ are determined by those (over $S$) of the powers of $I$.
\item[(2)] If $I$ is of fiber type, then $J$ is of fiber type.
\item[(3)] The $(1,0)$-regularity of $\Rees(I)$ is $0$.
\end{itemize}
\end{lemma}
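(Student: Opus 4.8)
The plan is to extract all three statements from the short exact sequence (1), after one structural observation about $W$, treating part (3) separately via local cohomology. First I would identify $W$ precisely. Since $S$ is a domain and $z\neq0$, multiplication by $z$ gives an isomorphism $(I^k:z)/I^k\cong z(I^k:z)/zI^k=W_k$ raising degrees by $1$. By hypothesis $(I^k:z)/I^k$ has finite length, so it is a submodule of $H^0_{\mm_S}(S/I^k)$; as $I$ has linear powers, $\reg(I^k)=kd$ and $\reg(S/I^k)=kd-1$, so $H^0_{\mm_S}(S/I^k)$, hence $(I^k:z)/I^k$, vanishes above degree $kd-1$. Since $I^k$ vanishes below degree $kd$, a nonzero homogeneous $f\in(I^k:z)\setminus I^k$ satisfies $zf\in I^k\setminus\{0\}$, so $\deg f\geq kd-1$. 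Thus $(I^k:z)/I^k$ is concentrated in degree $kd-1$ and $W_k$ in degree $kd$; in the bigrading of $\Rees(I)$ this says $W$ lives entirely in bidegrees $(0,k)$, $k\geq1$. Proving this concentration is the heart of the matter and the one place where linear powers is used; everything else is formal.

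For (1), applying Lemma \ref{reg} to $I^k$ (noting $0:_{I^k}z=0=H^0_{\mm_S}(I^k)$) gives $\reg(I^k/zI^k)=\reg(I^k)=kd$; as $z$ is a nonzerodivisor on $I^k$, base change to $R=S/(z)$ preserves the minimal resolution, so $\beta^R_i(I^k/zI^k)=\beta^S_i(I^k)$ and $I^k/zI^k$ has a linear $R$-resolution. Since $\reg_R(W_k)=kd$, the sequence $0\to W_k\to I^k/zI^k\to J^k\to0$ forces $\reg_R(J^k)\leq kd$; as $J^k$ is generated in degree $kd$ we get $\reg_R(J^k)=kd$, i.e.\ $J$ has linear powers. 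For the Betti numbers, $W_k$ is concentrated in degree $kd$, so $\Tor^R_p(W_k,K)$ sits in degree $kd+p$; the Tor of all three modules then lies on the diagonal, the connecting maps in the long $\Tor^R(-,K)$ sequence vanish for degree reasons, and one reads off $\beta^R_i(J^k)=\beta^S_i(I^k)-s_k\binom{n-1}{i}$, where $s_k=\dim_K(I^k:z)/I^k$ and $R$ is a polynomial ring in $n-1$ variables. Finally $s_k$ is itself determined by the $\beta^S_i(I^k)$: in degree $kd-1$ only $H^0_{\mm_S}(S/I^k)$ is nonzero, so $s_k=\HF(S/I^k)(kd-1)-P_{S/I^k}(kd-1)$ is an expression in the Hilbert function of $I^k$, which is read off its Betti numbers.

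For (2), write $\bar A=A/zA=R[y_1,\dots,y_m]$, so that $\Rees(I)/z\Rees(I)=\bar A/\bar P$ with $\bar P$ the image of $P(I)$. Because the chosen generators of $I$ are minimal, $P(I)$ (hence $\bar P$) has no generator in bidegree $(0,1)$; fiber type means its generators lie in bidegrees $(a,1)$, $a\geq1$, or $(0,b)$, $b\geq2$. By (1), $\Rees(J)=(\bar A/\bar P)/W$, so its defining ideal over $\bar A$ is $\bar P$ enlarged by lifts of generators of $W$, which by the concentration statement lie in bidegrees $(0,k)$. The $(0,1)$ part is exactly the $s_1$ linear relations among the $m$ (non-minimal) generators of $J$; eliminating them, by a bidegree-preserving linear substitution in the $y$'s, passes to the polynomial ring $A'=R[y_1,\dots,y_{m'}]$ on the $m'=m-s_1$ minimal generators of $J$ and leaves a generating set of $P(J)$ in bidegrees $(a,1)$, $a\geq1$, and $(0,b)$, $b\geq2$. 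Hence $P(J)$ has no minimal generator with $a\geq1$ and $b\geq2$, i.e.\ $J$ is of fiber type.

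Part (3) is independent of $z$. The plan is to use the local cohomology description of the $(1,0)$-regularity: for a finitely generated bigraded $A$-module $M$ one has $\reg_{(1,0)}(M)=\max_i\{e_i(M)+i\}$, where $e_i(M)$ is the top $x$-degree of $H^i_{(x_1,\dots,x_n)}(M)$ (the bigraded analogue of the classical equivalence, cf.\ \cite{R}). This local cohomology is computed by the \v{C}ech complex on $x_1,\dots,x_n$, hence respects the $y$-grading; in $y$-degree $k$ it returns $H^i_{\mm_S}(I^k)$ shifted into $x$-degree ($S$-degree)$\,-kd$, because $\Rees(I)_{(a,k)}=(I^k)_{kd+a}$. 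Therefore $e_i(\Rees(I))=\sup_k\{a_i(I^k)-kd\}$ and $\reg_{(1,0)}(\Rees(I))=\sup_k\{\reg(I^k)-kd\}=0$, the value $0$ being attained already by the generator in bidegree $(0,0)$. The only genuine obstacle in the whole lemma is the concentration of $W$ established at the outset; granting it, (1)--(3) follow by the formal manipulations above.
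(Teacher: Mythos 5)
Your parts (1) and (2) are correct and essentially identical to the paper's own proof: the load-bearing observation in both is that $W_k\cong (I^k:z)/I^k$ is concentrated in the single degree $kd-1$ (equivalently, $W$ lives in bidegrees $(0,k)$ and is killed by $\mm_S$), which you establish exactly as the paper does, and the Betti-number comparison and the elimination of the superfluous $y$'s for fiber type then run parallel to the paper's arguments.

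Part (3) contains a genuine gap. You rest the entire argument on the identity
$\reg_{(1,0)}(M)=\max_i\{e_i(M)+i\}$, where $e_i(M)$ is the top $x$-degree of $H^i_{(x_1,\dots,x_n)}(M)$, calling it ``the bigraded analogue of the classical equivalence, cf.\ \cite{R}''. Only one inequality of this identity is available: applying $H^\bullet_{(x_1,\dots,x_n)}$ to a minimal bigraded free resolution (note $H^n_{(x)}(A(-a,-b))$ has top $x$-degree $a-n$) gives $\max_i\{e_i(M)+i\}\leq \reg_{(1,0)}(M)$. Moreover, since the \v{C}ech complex on the $x$'s splits along $y$-degrees, $\max_i\{e_i(M)+i\}$ is exactly the supremum of the regularities of the strands, i.e.\ $\sup_k\{\reg(I^k)-kd\}$ when $M=\Rees(I)$. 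So the inequality that \emph{is} true for general reasons is precisely the content of R\"omer's direction of Theorem \ref{palombo} ($\reg_{(1,0)}\Rees(I)=0$ implies linear powers). The inequality you actually use, $\reg_{(1,0)}(M)\leq\max_i\{e_i(M)+i\}$, is the hard direction: it is not in \cite{R}, it is not classical, and assuming it begs the question --- it would instantly yield all of Theorem \ref{palombo}, and even the stronger equality $\reg_{(1,0)}\Rees(I)=\sup_k\{\reg(I^k)-kd\}$, which the paper nowhere claims. The classical proof that local cohomology bounds Betti numbers does not transfer to this bigraded setting: cutting by generic forms of degree $(1,0)$ never reaches an Artinian situation, because the $y$-directions survive; they contribute to $\Tor^A(M,K)$ (hence to $\reg_{(1,0)}$) but are invisible to $H^\bullet_{(x)}$.

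For contrast, the paper proves (3) by a route that avoids any such characterization: induction on $\dim S$ using the exact sequence $0\to W\to \Rees(I)/z\Rees(I)\to \Rees(J)\to 0$. Since $\mm_S$ kills $W$ and $W$ sits in $x$-degree $0$, $W$ is a $K[y_1,\dots,y_m]$-module, and the Herzog--Levin theorem on algebra retracts factors its Poincar\'e series over $B=R[y_1,\dots,y_m]$, giving $\reg_{(1,0)}(W)=0$; the inductive hypothesis gives $\reg_{(1,0)}\Rees(J)=0$; the exact sequence then gives $\reg_{(1,0)}(\Rees(I)/z\Rees(I))=0$; and since $z$ is a nonzerodivisor on the domain $\Rees(I)$, this equals $\reg_{(1,0)}\Rees(I)$. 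If you want to keep your approach, you must actually prove the inequality $\reg_{(1,0)}(M)\leq\max_i\{e_i(M)+i\}$ at least for $M=\Rees(I)$ --- and that is essentially the statement you were asked to prove, not a citable fact.
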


\begin{proof} (1) After a change of coordinates we may assume that $z=x_n$, and $R=K[x_1,\dots,x_{n-1}]$.
Denote the maximal homogeneous ideal of $R$ by $\mm_R$.
By Lemma \ref{reg} we have
$$
dk-1=\reg(S/I^k)=\max\{ \reg(R/J^k), a_0(S/I^k)\}.
$$
It follows that $J$ has linear powers. Also $a_0(S/I^k)=kd-1$ or $H^0_{\mm_S}(S/I^k)=0$. We also have $I^k:z=(V_k)+I^k$ where
$$
V_k=(I^k:z)_{kd-1}=(I^k:\mm_S)_{kd-1}=H^0_{\mm_S}(S/I^k)_{kd-1}
$$
and $\mm_SV_k\subset I^k$. The Betti numbers of an ideal with linear resolution are determined by its Hilbert function. Comparing Hilbert functions and taking into account that $\dim V_k$ is $\beta^S_{n-1}(I^k)$ it follows that
$$\beta^R_i(J^k)=\beta^S_i(I^k)-\binom{n-1}{i}\beta^S_{n-1}(I^k)$$
and hence the Betti numbers of the powers of $J$ are determined by those of the powers of $I$.

(2) We have a presentation $\Rees(I)/x_n\Rees(I)=R[y_1,\dots,y_m]/P_1$ where $P_1=(P(I)+(x_n))/(x_n)$ and we may represent the ideal $W$ as $(U+P_1)/P_1$ with $U$ generated in degrees $(0,*)$ and $(x_1,\dots,x_{n-1})U\subset P_1$. By construction, $\Rees(J)\cong R[y_1,\dots,y_m]/(U+P_1)$. Note that this might not be the minimal presentation of $\Rees(J)$ because the number of generators of $J$ can be smaller than that of $I$. Indeed, $\mu(I)-\mu(J)=\dim U_{0,1}$. We may choose the $y_i$ such that $y_1,\dots,y_t$ map to the minimal generators of $J$ and the remaining $y_{t+1},\dots,y_m$ map to $0$ (i.e. map to elements of $x_nV_1$ in the presentation of $\Rees(I)$). With these choices, $P_1+U=P(J)+(y_{t+1},\dots, y_m)$. By assumption, $P(I)$ has minimal generators only in degree $(1,1)$ and $(0,*)$. Then the same is true for $P_1$. Hence $P(J)+(y_{t+1},\dots, y_m)$ has minimal generators of degree $(1,1)$ and $(0,*)$. This is the desired result.

Finally we prove statement (3) by induction on the dimension.
Let $A=S[y_1,\dots,y_m]$ be the polynomial ring presenting $\Rees(I)$, and let $B=R[y_1,\dots,\allowbreak y_m]$ the corresponding polynomial ring for $\Rees(J)$.

Since $\mm_R$ annihilates $W$, we can see $W$ as a $K[y_1,\dots,y_m]$-module. Since $K[y_1,\dots,y_m]$ is an algebra retract of
$B$, we have
$$
P_B(W)=P_{K[y]}(W)P_{B}(K[y]);
$$
see Herzog \cite[Thm.1]{H} or Levin \cite[Thm.1.1]{L}. Therefore the $(1,0)$-regula\-ri\-ty of $W$ is $0$. By induction on the dimension we may assume that the $(1,0)$-regularity of $\Rees(J)$ is $0$, and we conclude that $\Rees(I)/x_n\Rees(I)$ has $(1,0)$-regularity $0$. This implies that the $(1,0)$-regularity of $\Rees(I)$ is $0$.
\end{proof}

So we have:
\begin{theorem}
\label{palombo}
The ideal $I$ has linear powers if and only if $\reg_{(1,0)} \Rees(I)=0$.
\end{theorem}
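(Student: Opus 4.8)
The plan is to prove the two implications separately. The forward direction is essentially free: if $I$ has linear powers then, since $K$ is infinite, a general linear form $z$ satisfies the finite-length hypothesis of Lemma \ref{palo} simultaneously for all $k$, and conclusion (3) of that lemma says precisely that $\reg_{(1,0)}\Rees(I)=0$. It is also worth recording that the inequality $\reg_{(1,0)}\Rees(I)\geq 0$ holds unconditionally: $\Rees(I)=A/P(I)$ is cyclic and generated in bidegree $(0,0)$, so $\beta^A_{0,(a,b)}\neq 0$ only for $(a,b)=(0,0)$, whence $0-0=0$ already forces the regularity to be at least $0$. Thus the real content of the equality is the upper bound, and the whole weight of the forward implication has already been carried by Lemma \ref{palo}.

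For the reverse implication I would set $r=\reg_{(1,0)}\Rees(I)$, let $d$ be the common generator degree of $I$, and prove the bound $\reg_S(I^k)\leq dk+r$ for every $k$. The device is to restrict the minimal bigraded free resolution $F_\bullet\to\Rees(I)$ over $A=S[y_1,\dots,y_m]$, where $F_i=\bigoplus_{a,b}A(-a,-b)^{\beta^A_{i,(a,b)}}$, to the component of $y$-degree $k$. Passing to a fixed graded component is exact, so $(F_\bullet)_{(*,k)}$ is an exact complex resolving $\Rees(I)_{(*,k)}$. By the definition of the bigrading, $\Rees(I)_{(i,k)}=(I^k)_{dk+i}$, so as a graded $S$-module $\Rees(I)_{(*,k)}=I^k(dk)$; and since $A(-a,-b)_{(*,k)}\cong S(-a)^{\binom{k-b+m-1}{m-1}}$ is free over $S$ and nonzero exactly when $b\leq k$, while the differentials are $A$-linear and bidegree-preserving and hence restrict to $S$-linear maps, the complex $(F_\bullet)_{(*,k)}$ is a (generally non-minimal) free $S$-resolution of $I^k(dk)$ in which the twist $S(-a)$ occurs at homological degree $i$ only if $\beta^A_{i,(a,b)}\neq 0$ for some $b\leq k$.

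Because passing to the minimal resolution only cancels matching free summands, the regularity is bounded by the shifts of any free resolution, so
$$\reg_S\bigl(I^k(dk)\bigr)\leq\max\{a-i:\beta^A_{i,(a,b)}\neq 0\text{ for some }b\leq k\}\leq r.$$
Using $\reg_S(I^k(dk))=\reg_S(I^k)-dk$ this is exactly $\reg_S(I^k)\leq dk+r$. Finally $I^k$ is generated in degree $dk$, so $\reg_S(I^k)\geq dk$ always; hence $r=0$ pinches $\reg_S(I^k)=dk$ for every $k$, which is linear powers. The only genuinely technical point is the bookkeeping of this restriction functor, namely verifying that the $y$-degree-$k$ component of the bigraded $A$-resolution is an honest free $S$-resolution of $I^k(dk)$ whose twists are controlled by $r$; once that is in place the two inequalities close the argument immediately.
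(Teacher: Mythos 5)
Your proof is correct, but it is not organized the same way as the paper's: the paper proves only the forward implication itself (via Lemma \ref{palo}(3), exactly as you do, with the general linear form supplied by the infinite field hypothesis) and disposes of the converse entirely by citation, to R\"omer \cite[Cor.5.5]{R} and to \cite[Thm.1.1]{HHZ}, whereas you prove the converse from scratch. Your argument for that half is sound: restricting the minimal bigraded free $A$-resolution of $\Rees(I)$ to the $y$-degree-$k$ strand is exact because exactness of bidegree-preserving complexes is checked bidegree by bidegree; the strand of $A(-a,-b)$ is indeed the free graded $S$-module $S(-a)^{\binom{k-b+m-1}{m-1}}$, vanishing precisely when $b>k$; the strand of $\Rees(I)$ is $I^k(dk)$; and the regularity of a module is bounded by the twists of any finite graded free resolution, minimal or not, since the minimal one splits off as a direct summand. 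Combined with $\reg(I^k)\geq dk$ (as $I^k$ is generated in degree $dk$) and your correct observation that $\reg_{(1,0)}\Rees(I)\geq 0$ always holds because $\beta^A_{0,(0,0)}\neq 0$, this closes the equivalence. In substance your strand argument is the standard proof of the cited results, so what you have done is make the theorem self-contained at the cost of the bookkeeping you describe; the paper's citation buys brevity but leaves the harder half of the equivalence external to the text.
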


The implication $\Rightarrow$ has been proved in Lemma \ref{palo}. The converse has been proved by R\"omer \cite[Cor.5.5]{R}. An alternative proof is given in \cite[Thm.1.1]{HHZ}.

As the following example shows, ideals with linear powers need not to be of fiber type nor have a Koszul Rees algebra.
\begin{example}
\label{ex1}
The Rees algebra $\Rees(I)$ of the ideal
$$
I=(a^2b,a^2c, abd, b^2d)\subset K[a,b,c,d]
$$
is defined by the ideal
$$
P(I)=(-y_2b + y_1c, -y_3a + y_1d, -y_4a + y_3b, -y_3^2c + y_2y_4d)
$$
whose generators form a Gröbner basis. Hence the $(1,0)$-regularity of $\Rees(I)$ is $0$. It follows that $I$ has linear powers, it is not of fiber type and $\Rees(I)$ is not Koszul.
\end{example}

The following two examples are ideals with linear powers and of fiber type, but with a non-quadratic (hence non-Koszul) Rees algebra.

\begin{example}
\label{ex2}
A strongly stable ideal is a monomial ideal $I$ satisfying $I:x_i\supseteq I:x_j$ for every $i<j$. It is known that the regularity of a strongly stable ideal is equal to the largest degree of a minimal generator.

Let $I$ be a strongly stable ideal generated in degree $d$. The powers of $I$ are strongly stable as well, and hence $I$ has linear powers.
Moreover, strongly stable ideals are of fiber type; see Herzog, Hibi and Vladoiu \cite{HHV}. On the other hand, $\Rees(I)$ need not be quadratic. For example, the smallest strongly stable ideal of $K[x_1,x_2,x_3]$ containing the monomials $x_2^6, x_1^2x_2^2x_3^2,\allowbreak x_1^3x_3^3$ has a non-quadratic Rees algebra.
\end{example}

\begin{example}
\label{ex3}
Let $I$ be an ideal generated by monomials of degree $2$ and with a linear resolution. Then by \cite[Thm.2]{HHZ} $I$ has linear powers. Furthermore  Villarreal proved in  \cite[Thm.8.2.1]{V} that $I$ is of fiber type if it is square-free. On the other hand $\Rees(I)$ need not be quadratic. The ideal
$$
(x_3x_6, x_1x_3, x_5x_6, x_4x_6, x_2x_3, x_1x_5, x_3x_4, x_1x_6, x_1x_2, x_4x_5)
$$
has a linear resolution and its Rees algebra is not quadratic.
\end{example}

\section{Maximal minors with linear powers}
Let $X$ be a $m\times n$ matrix with entries in a Noetherian ring $R$ and let $I_j(X)$ denote the ideal of $R$ generated by the size $j$ minors of $X$. We will always assume that $m\leq n$. The ideal $I_m(X)$ of maximal minors  is resolved by the Eagon-Northcott complex provided $\grade(I_m(X))\geq n-m+1$.
In \cite[Thm.5.4]{ABW} Akin, Buchsbaum and Weyman for every $k\in \NN$ describe a complex of free $R$-modules resolving the $k$-th power of the ideal of maximal minors of $X$ under certain genericity conditions. In their notations the complex is denoted by $\YY(k,\phi)$ where $\phi$ is the $R$-linear map $R^n\otimes R^m\to R$ associated with $X$.  However we prefer to use the notation  $\YY(k,X)$ to stress the dependence on the input matrix $X$. They proved:

\begin{theorem}
\label{ABW}
Suppose $\grade I_j(X)\geq (m+1-j)(n-m)+1$ for every $j$. Then $I_m(X)^k$ is resolved by the free complex $\YY(k,X)$. Furthermore the length of $\YY(k,X)$ is $\min\{k,m\}(n-m)$.
\end{theorem}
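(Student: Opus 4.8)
The plan is to fix the numerical data of the complex, then establish acyclicity over $R$ by the Buchsbaum-Eisenbud criterion, reducing everything to a grade estimate for the differentials in which the hypotheses enter directly. Throughout write $F=R^m$, $G=R^n$, and let $\phi\colon G\to F$ be the map given by $X$; by construction the terms of $\YY(k,X)$ are direct sums of tensor products of Schur functors of $F$ and $G$, and the differentials $\partial_i$ are assembled from the entries of $X$ through Pieri maps. In particular the ranks of the free modules $Y_i$, and the expected ranks $r_i:=\rank\partial_i$, are fixed combinatorial data independent of $R$, and the formation of $\YY(k,X)$ commutes with base change.

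First I would record the numerics. Localizing at a prime not containing $I_m(X)$, the map $\phi$ is split surjective and $\YY(k,X)$ becomes split exact; reading off ranks there yields the identities $\rank Y_i=r_i+r_{i+1}$ for all $i$, while the augmentation $Y_0\to I_m(X)^k$ is surjective by construction. The length claim is then a direct reading of the term structure: one locates the largest homological degree carrying a nonzero Schur-functor summand. For $k<m$ this degree is $k(n-m)$, and for $k\ge m$ the top terms cease to depend on $k$ and the length saturates at $m(n-m)$; together this is $\min\{k,m\}(n-m)$.

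The heart of the matter is acyclicity. Given the rank identities, the Buchsbaum-Eisenbud criterion says $\YY(k,X)$ resolves $I_m(X)^k$ as soon as $\grade I_{r_i}(\partial_i)\ge i$ for every $i\ge 1$, where $I_{r_i}(\partial_i)$ is the ideal of $r_i\times r_i$ minors of the $i$-th differential. The problem is thus to bound these grades by those of the classical determinantal ideals $I_j(X)$. The governing fact is that the complex fails to split precisely along the rank stratification of $X$: over the locus $V(I_j(X))$, where $\phi$ drops to rank at most $j-1$, the differentials whose minor-ideals one must control are exactly those in homological degrees at most $(m+1-j)(n-m)$. Matching the homological index $i$ against the stratum it detects shows that $\grade I_{r_i}(\partial_i)\ge i$ follows from the stated hypotheses $\grade I_j(X)\ge(m+1-j)(n-m)+1$. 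As a sanity check these hold in the generic case $X=X_0$ over $\ZZ[x_{ij}]$, since there $\grade I_j(X_0)=(m-j+1)(n-j+1)=(m-j+1)(n-m)+(m-j+1)^2$ exceeds the required bound for every $j\le m$; this generic instance is also the natural starting point if one prefers to descend by generic perfection.

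The main obstacle is this last matching step: making precise, in a characteristic-free way, the correspondence between the minor-ideals $I_{r_i}(\partial_i)$ of the Pieri differentials and the determinantal ideals $I_j(X)$, and thereby pinning down the exact thresholds $(m+1-j)(n-m)+1$. This is where the representation-theoretic bookkeeping does the real work. The alternative, and the route actually taken by Akin, Buchsbaum and Weyman, is to avoid the explicit minor computation altogether by realizing $\YY(k,X)$ as a Schur complex and proving exactness directly via the standard-basis (straightening) theorem, a characteristic-free argument that also delivers the term structure needed for the length computation.
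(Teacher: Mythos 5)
Your proposal leaves open precisely the two points where actual work is needed, and one of them is a genuine gap. For the acyclicity half the situation is tolerable: the paper does not reprove this part either, but quotes it as \cite[Thm.5.4]{ABW}, and your final fallback to Akin--Buchsbaum--Weyman plays the same role. Your Buchsbaum--Eisenbud sketch, however, should not be mistaken for an argument: you concede that the grade estimates $\grade I_{r_i}(\partial_i)\geq i$ are exactly the unproved ``matching step,'' and there is also a circularity in your numerics, since split exactness of $\YY(k,X)$ at primes not containing $I_m(X)$ is itself an exactness statement and so cannot be the source of the rank identities $\rank Y_i=r_i+r_{i+1}$; one would first have to verify them in a split model case where $\phi$ is a split surjection.

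The genuine gap is the length equality, which is the one part of the statement that is \emph{not} in \cite{ABW}: as the paper's remark following the theorem stresses, the proof of \cite[Thm.5.4]{ABW} only gives the upper bound, namely that the length of $\YY(k,X)$ is at most $\min\{k,m\}(n-m)$. Hence your closing claim that ABW's straightening argument ``also delivers the term structure needed for the length computation'' does not close this gap, and your own treatment of it --- ``a direct reading of the term structure'' locating the top nonzero Schur summand --- is a bare assertion: you neither identify that term nor prove it is nonzero, and that nonvanishing is where all the combinatorial work lies. The paper argues differently: since the ranks of the terms of $\YY(k,X)$ do not depend on the base, one may assume $X$ is a matrix of variables over a field $K$ of characteristic $0$, and there one shows $\Tor_i^R(I_m(X)^k,K)\neq 0$ for $i=\min\{k,m\}(n-m)$, so that the projective dimension (hence the length of any free resolution) is at least $\min\{k,m\}(n-m)$. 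This Tor is realized as the kernel of an explicit $\GL(V)\times\GL(W)$-equivariant map, and using the Cauchy formula and the Littlewood--Richardson rule one exhibits an irreducible summand of the source that does not occur in the target ($L_{(m^{n+k-m})}V\otimes L_{(n^m,m^{k-m})}W$ when $k\geq m$, and $L_{(m^k,k^{n-m})}V\otimes L_{(n^k)}W$ when $k<m$); Schur's lemma then gives the nonvanishing. Your top-term strategy could in principle replace this, but as written the decisive computation is simply missing.
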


\begin{remark}
The fact that the length of $\YY(k,X)$ is $\min\{k,m\}(n-m)$ is not explicitly stated in \cite{ABW}. Indeed in the proof of  \cite[Thm.5.4]{ABW} it is shown that the the length of $\YY(k,X)$ is $\leq \min\{k,m\}(n-m)$. 
To show that equality holds one can assume right away that $X$ is a matrix of variables $x_{ij}$ and that $R=K[x_{ij}]$ with $K$ a field of characteristic $0$. 
 Then,  according to \cite[Section 5]{ABW}, $\Tor_{i}^R(I_m(X)^k,K)$ is isomorphic to the kernel of a $\operatorname{GL}(V)\times \operatorname{GL}(W)$-equivariant homomorphism from 
$$
A=\bigwedge^{i}(V\otimes W)\otimes (L_{(m^k)}V\otimes L_{(m^k)}W)
$$ to 
$$
B=\bigwedge^{i-1}(V\otimes W)\otimes (L_{(m^k,1)}V\otimes L_{(m^k,1)}W),
$$
where $V$ and $W$ are $K$-vector spaces of dimension $m$ and $n$ and   $L_\lambda$ denotes the Schur functor associated with the partition $\lambda$.  The decompositions in irreducible $\operatorname{GL}(V)\times \operatorname{GL}(W)$-modules can be computed for all the involved representations, using the (skew) Cauchy formula and the Little\-wood-Richardson rule. In particular, when $i=\min\{k,m\}(n-m)$, one can check the following: 
\begin{itemize} 
\item[(1)] If $k\geq m$, then $L_{(m^{n+k-m})}V\otimes L_{(n^m,m^{k-m})}W$ is a direct summand of $A$ and not of $B$. 
\item[(2)] If $k< m$, then $L_{(m^k,k^{n-m})}V\otimes L_{(n^k)}W$ is a direct summand of $A$ and not of $B$. 
\end{itemize} 
So, in both cases, Schur's lemma implies that the kernel of the above map is not zero. It follows that $\Tor_{i}^R(I_m(X)^k,K)\neq 0$ for 
$i=\min\{k,m\}(n-m)$. Hence the projective dimension of   $I_m(X)^k$ is $\geq \min\{k,m\}(n-m)$.  \end{remark}

For later applications we record the following lemma and its corollary:

\begin{lemma}
\label{lemass}
Suppose that $\grade I_m(X)\ge n-m+1$ and $\grade I_j(X)\geq(m+1-j)(n-m)+2$ for every $j=1,\dots,m-1$, then
$$\Ass(R/I_m(X)^k)= \Ass(R/I_m(X))$$ for every $k>0$.
\end{lemma}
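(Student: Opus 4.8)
The plan is to describe both sides as sets of primes containing $I_m(X)$ and to compare depths after localization. One inclusion is essentially formal: under the hypothesis $\grade I_m(X)\ge n-m+1$ the Eagon--Northcott complex resolves $R/I_m(X)$, so this module is perfect of grade $n-m+1$ and hence grade-unmixed, giving $\Ass(R/I_m(X))=\Min(R/I_m(X))$. Since $\Min(R/I_m(X))=\Min(R/I_m(X)^k)$ and minimal primes are always associated, we obtain $\Ass(R/I_m(X))\subseteq\Ass(R/I_m(X)^k)$ for free.

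For the reverse inclusion I would fix $P\in\Ass(R/I_m(X)^k)$ and work in the local ring $R_P$. Let $r$ be the rank of $X$ modulo $P$; since $I_m(X)\subseteq P$ we have $r\le m-1$, while $I_r(X)\not\subseteq P$ means that some $r\times r$ minor becomes a unit in $R_P$. Clearing this unit minor by invertible row and column operations puts $X$ over $R_P$ into the block form $\left(\begin{smallmatrix}\mathrm{id}_r&0\\0&X'\end{smallmatrix}\right)$, where $X'$ is an $(m-r)\times(n-r)$ matrix all of whose entries lie in $PR_P$. These operations leave the ideals of minors unchanged, so $I_m(X)R_P=I_{m-r}(X')$ and, more generally, $I_{r+i}(X)R_P=I_i(X')$ for all $i$. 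Thus over $R_P$ the module $R_P/(I_m(X)R_P)^k$ is the corresponding power of the ideal of maximal minors of the smaller matrix $X'$.

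Next I would apply Theorem \ref{ABW} to $X'$ over $R_P$. The key point is that the grade hypotheses survive localization: since $\Ext^j_R(R/I,R)$ localizes to $\Ext^j_{R_P}(R_P/IR_P,R_P)$, one has $\grade_{R_P}(IR_P)\ge\grade_R I$ for every $I\subseteq P$. Translating the inequalities $\grade_{R_P}I_i(X')\ge((m-r)+1-i)(n-m)+1$ required by Theorem \ref{ABW} back through $I_i(X')=I_{r+i}(X)R_P$, they read $\grade_{R_P}I_j(X_P)\ge(m+1-j)(n-m)+1$ for $j=r+1,\dots,m$, and these are implied (with room to spare for $j<m$) by the standing hypotheses. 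Hence $\YY(k,X')$ resolves $(I_m(X)R_P)^k$, so $\projdim_{R_P}(R_P/(I_m(X)R_P)^k)\le\min\{k,m-r\}(n-m)+1$. Because $P$ is associated, $\depth(R_P/(I_m(X)R_P)^k)=0$, and the Auslander--Buchsbaum formula yields $\depth R_P\le\min\{k,m-r\}(n-m)+1$. On the other hand $I_{r+1}(X)\subseteq P$ gives $\depth R_P\ge\grade_{R_P}(I_{r+1}(X)R_P)\ge\grade_R I_{r+1}(X)$.

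Finally I would play these two bounds against each other. If $r\le m-2$ the hypothesis forces $\grade_R I_{r+1}(X)\ge(m-r)(n-m)+2$, which exceeds the upper bound $\min\{k,m-r\}(n-m)+1\le(m-r)(n-m)+1$, a contradiction; so no such $P$ occurs. If $r=m-1$ the two bounds pinch $\depth R_P$ to exactly $n-m+1$, and applying the Auslander--Buchsbaum formula to $R_P/I_m(X)R_P$, which is resolved by the Eagon--Northcott complex and so has projective dimension $n-m+1$ over $R_P$, gives $\depth(R_P/I_m(X)R_P)=0$, i.e. $P\in\Ass(R/I_m(X))$. Combined with the first paragraph this yields the desired equality. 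The main obstacle I anticipate is the local reduction to $X'$ together with the verification that the genericity is not lost under localization, namely the monotonicity $\grade_{R_P}(IR_P)\ge\grade_R I$ and the bookkeeping that makes the two numerical bounds collide precisely in the two cases $r\le m-2$ and $r=m-1$.
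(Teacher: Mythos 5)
Your reverse inclusion $\Ass(R/I_m(X)^k)\subseteq\Ass(R/I_m(X))$ is correct and is essentially the paper's own argument in telescoped form: where the paper inducts on $m$, inverting one entry of $X$ at a time and playing the bound $\projdim R/I_m(X)^k\le m(n-m)+1$ from Theorem \ref{ABW} against $\grade I_1(X)\ge m(n-m)+2$, you localize at $P$ once, split off the full rank-$r$ identity block, and run the same Auslander--Buchsbaum count; your hypothesis at $j=r+1$ plays exactly the role of the paper's $I_1$ of the reduced matrix, and your case $r=m-1$ is the paper's base case $m=1$. The supporting points you flag (persistence of grade under localization at $P\supseteq I$, the identification $I_{r+i}(X)R_P=I_i(X')$ with $X'$ having entries in $PR_P$) are all fine.

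The genuine gap is in your forward inclusion. You claim that perfection of $R/I_m(X)$ makes it grade-unmixed ``giving $\Ass(R/I_m(X))=\Min(R/I_m(X))$.'' The lemma is stated over an arbitrary Noetherian ring $R$, and there the implication fails: perfection yields precisely that $P\in\Ass(R/I_m(X))$ if and only if $P\supseteq I_m(X)$ and $\depth R_P=n-m+1$, so all associated primes have \emph{grade} $n-m+1$ --- but grade does not detect height outside the Cohen--Macaulay setting, so embedded primes can occur. Concretely, take $m=n=1$, $R$ the local ring at the origin of $k[a,b,c]/((a)\cap(b,c))$ (a plane and a line meeting in a point, so $\depth R=1$), and $X=(a+b)$: the hypotheses hold ($\grade I_1(X)=1=n-m+1$, the range $j\le m-1$ is empty), $R/(a+b)$ is perfect of grade $1$, yet $\mathfrak{m}$ is an embedded associated prime, since $\depth R/(a+b)=0$ while $\mathfrak{m}$ properly contains the minimal prime $(a,b)$ of $(a+b)$. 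So $\Ass\neq\Min$ here, and your derivation of $\Ass(R/I_m(X))\subseteq\Ass(R/I_m(X)^k)$ collapses for embedded associated primes. The repair is cheap and uses your own machinery: given $P\in\Ass(R/I_m(X))$, perfection forces $\depth R_P=n-m+1$, your case analysis then excludes $r\le m-2$ (which would force $\depth R_P\ge(m-r)(n-m)+2>n-m+1$), so $r=m-1$ and $I_m(X)R_P$ is generated by a regular sequence of length $n-m+1$; powers of such an ideal are again perfect of grade $n-m+1$ (this is exactly the paper's $m=1$ base case), whence $\depth R_P/(I_m(X)R_P)^k=0$ and $P\in\Ass(R/I_m(X)^k)$. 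Note that in the paper's eventual application $R$ is a polynomial ring, where grade equals height and your unmixedness claim is valid --- but the lemma, and the paper's proof of it, hold in the stated Noetherian generality, so the step needs the fix above.
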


\begin{proof}
We use induction on $m$. Let $m=1$ and $P$ be an associated prime of
$I_m(X)^k$. After localization with respect to $P$ we may assume
that $R$ is local with maximal ideal $P$. The hypothesis implies
that $I=I_1(X)$ is now generated by a regular sequence. Then the
powers of $I$ are perfect ideals of grade $n$, resolved by the
Eagon-Northcott complex. Thus $\depth R_P=n$, and so $P$ is an
associated prime of $I$ as well.

Now let $m\ge 2$, and let $P$ be a prime ideal. Suppose first that
$P$ contains $I_1(X)$. We can apply Theorem \ref{ABW}. It shows that the projective dimension of $R/I_m(X)^k$ is at most
$m(n-m)+1$. The inequality for $\grade I_1(X)$ implies that $\depth
R_P>\projdim (R/I_m(X)^k)_P$. Thus $P$ is not associated to
$I_m(X)^k$, and it is not an associated prime of $I_m(X)$ for the
same reason.

Now suppose that $P$ does not contain $I_1(X)$. Then we can apply
the standard inversion argument to an entry of $X$, say $x_{11}$.
This argument reduces both $m$ and $n$ as well as the sizes of
minors by $1$, and since our bound on grade depends only on
differences of these numbers, they are preserved. Since the
inversion of an element outside $P$ does not affect the property of
$P$ being an associated prime, we can apply the induction
hypothesis.
\end{proof}

\begin{remark}
Lemma \ref{lemass} can also be derived from \cite[(9.27)(a)]{BV} that  gives a lower bound and the asymptotic value for the depth of   
$R/I_m(X)^k$.
\end{remark}

\begin{corollary}
\label{asspro}
Let $X$ be a $m\times n$ matrix with entries in a Noetherian ring
$R$. Suppose that for some number $p$ with $1\leq p\leq m$, we have:
\begin{itemize}
\item[(i)] $\grade I_m(X)\ge n-m+1$,
\item[(ii)] $\grade I_j(X)\geq (m+1-j)(n-m)+2$ for all $j=p+1,\dots,m-1$.
\end{itemize}
Then
$$\Ass(R/I_m(X)^k)\subseteq \Ass(R/I_m(X)) \cup \{ P : P\supseteq I_p(X)\}$$
for every $k>0$.
\end{corollary}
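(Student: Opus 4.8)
The plan is to reduce the statement to Lemma \ref{lemass} by localizing at an associated prime and peeling off an invertible $p\times p$ block. Fix $P\in\Ass(R/I_m(X)^k)$. If $P\supseteq I_p(X)$ there is nothing to prove, so assume $P\not\supseteq I_p(X)$; the goal is to show $P\in\Ass(R/I_m(X))$ (note that in this case $p<m$ automatically, since every associated prime of $R/I_m(X)^k$ contains $I_m(X)$). First I would localize at $P$. Since $R_P$ is local and $I_p(X)R_P=R_P$, at least one $p\times p$ minor of $X$ is a unit in $R_P$. Permuting rows and columns to move the corresponding submatrix to the top left corner and performing invertible row and column operations, i.e.\ passing to the Schur complement, I can transform $X$ over $R_P$ into block form $\begin{pmatrix} E_p & 0 \\ 0 & X'\end{pmatrix}$, where $E_p$ is the $p\times p$ identity matrix and $X'$ is an $(m-p)\times(n-p)$ matrix over $R_P$. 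This is precisely the inversion argument from the proof of Lemma \ref{lemass}, applied $p$ times at once. Writing $m'=m-p$ and $n'=n-p$, so that $n'-m'=n-m$, these operations preserve the ideals of minors in the sense that $I_j(X)R_P=I_{j-p}(X')$ for $j\ge p$; in particular $I_m(X)R_P=I_{m'}(X')$.

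The key point is that the hypotheses of Lemma \ref{lemass} transfer to $X'$ over $R_P$. Localization at a prime containing an ideal does not lower its grade: a maximal $R$-regular sequence contained in the ideal remains $R_P$-regular by flatness of localization, and the relevant quotient stays nonzero because the sequence lies in $P$. Combining this with the identity $I_j(X)R_P=I_{j-p}(X')$ and the observation that the required bounds depend only on the differences $m-j$ and $n-m$, which are unchanged under the simultaneous shift $j\mapsto j-p$, $m\mapsto m'$, $n\mapsto n'$, hypothesis (i) yields $\grade I_{m'}(X')\ge n'-m'+1$ and hypothesis (ii) yields $\grade I_{j'}(X')\ge(m'+1-j')(n'-m')+2$ for all $j'=1,\dots,m'-1$. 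Thus Lemma \ref{lemass} applies to $X'$ and gives
$$
\Ass\bigl(R_P/I_{m'}(X')^k\bigr)=\Ass\bigl(R_P/I_{m'}(X')\bigr),
$$
which, after substituting $I_{m'}(X')=I_m(X)R_P$, reads $\Ass(R_P/I_m(X)^kR_P)=\Ass(R_P/I_m(X)R_P)$.

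Finally I would invoke the compatibility of associated primes with localization: for a finitely generated module $M$ one has $P\in\Ass_R(M)$ if and only if $PR_P\in\Ass_{R_P}(M_P)$. Applying this to $M=R/I_m(X)^k$ and to $M=R/I_m(X)$ together with the displayed equality, the assumption $P\in\Ass(R/I_m(X)^k)$ forces $PR_P\in\Ass(R_P/I_m(X)R_P)$, hence $P\in\Ass(R/I_m(X))$, which is exactly what we wanted. I expect the main obstacle to be the third step: verifying carefully that localizing at $P$ does not weaken the grade inequalities and that the index bookkeeping of the Schur-complement reduction lines up exactly with the index shift demanded by Lemma \ref{lemass}. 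Everything else is formal.
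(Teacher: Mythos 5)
Your proof is correct and is essentially the paper's own argument: the paper likewise disposes of primes containing $I_p(X)$ trivially and, for the remaining primes, inverts a $p$-minor outside $P$ so that all sizes drop by $p$ and Lemma~\ref{lemass} applies. You have simply made explicit the details the paper leaves implicit --- the Schur-complement reduction to the block form, the identity $I_j(X)R_P=I_{j-p}(X')$, the fact that grade does not drop under localization, and the compatibility of $\Ass$ with localization.
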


\begin{proof}
Again we use induction on $m$. If a prime ideal $P$ contains
$I_p(X)$, there is nothing to prove for $P$. Otherwise it does not
contain a $p$-minor. Its inversion reduces all sizes by $p$. But
then we are in the situation of Lemma \ref{lemass}.
\end{proof}

From now one we assume that $S$ is a polynomial ring over a field $K$ and $X$ is a $m\times n$ matrix whose entries are linear forms in $S$.
In this setting we have:

\begin{proposition}
\label{ABWEH}
Let $X$ be a $m\times n$ matrix of linear forms with $m\leq n$. Suppose $\height I_j(X)\geq (m+1-j)(n-m)+1$ for every $j$. Then
\begin{itemize}
\item[(1)] $I_m(X)$ has linear powers.
\item[(2)] The Rees algebra $\Rees(I_m(X))$ is Koszul.
\end{itemize}
\end{proposition}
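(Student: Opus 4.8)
The plan is to deduce (1) from the Akin--Buchsbaum--Weyman resolution and then obtain (2), the stronger statement, by a specialization argument from the generic matrix; note that conversely (1) is a formal consequence of (2) via Blum's Proposition \ref{rees}.

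Since $S$ is Cohen--Macaulay, $\grade I_j(X)=\height I_j(X)$, so the hypothesis $\height I_j(X)\ge (m+1-j)(n-m)+1$ is exactly the grade condition of Theorem \ref{ABW}, and $I_m(X)^k$ is resolved by $\YY(k,X)$. The point for (1) is that, because the entries of $X$ are linear forms, the map attached to $X$ is homogeneous of degree $1$, and therefore every differential in the Schur-type complex $\YY(k,X)$ is given by a matrix of linear forms. Thus $\YY(k,X)$ is a linear complex whose $i$-th free module is generated in degree $mk+i$; being acyclic it is a linear resolution, so $\reg(I_m(X)^k)=mk$ for all $k$ and $I_m(X)$ has linear powers. (The case $k=1$ is just the Eagon--Northcott complex of the linear matrix $X$.)

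For (2) I would reduce to the generic case by specialization. After discarding the variables not occurring in $X$ we may assume $N=\height I_1(X)=\dim_K S_1$, so the $mn$ entries of $X$ span $S_1$; the assignment $y_{ij}\mapsto (X)_{ij}$ then defines a surjection $\pi\colon T=K[y_{ij}]\to S$ from the coordinate ring of the generic $m\times n$ matrix $Y$, whose kernel is generated by a set $W$ of $mn-N$ linear forms forming part of a coordinate system of $T$. The key structural input is that for the generic matrix the Rees algebra $\Rees(I_m(Y))$ is Koszul and Cohen--Macaulay: its fiber cone is the (Koszul) homogeneous coordinate ring of the Grassmannian, $I_m(Y)$ is of fiber type, and the defining ideal of $\Rees(I_m(Y))$ (the linear syzygies from the Eagon--Northcott complex together with the Pl\"ucker relations) admits a Gr\"obner basis of quadrics. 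Viewing $W$ inside $\Rees(I_m(Y))$ through its $T$-algebra structure, the plan is to show that $W$ is a regular sequence on $\Rees(I_m(Y))$ and that the induced surjection
$$
\Rees(I_m(Y))/W\,\Rees(I_m(Y))\longrightarrow \Rees(I_m(X))
$$
is an isomorphism. Granting this, Koszulness descends, since a quotient of a Koszul algebra by a regular sequence of linear forms is again Koszul; hence $\Rees(I_m(X))$ is Koszul, which also reproves (1).

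The main obstacle is this last compatibility step, and the height hypotheses enter precisely here. Because they force the grade conditions of Theorem \ref{ABW} for both $X$ and $Y$, the complexes $\YY(k,Y)$ specialize along $\pi$ to the acyclic complexes $\YY(k,X)$, so no degeneration occurs in the powers and the bigraded Hilbert series satisfy $\HS(\Rees(I_m(X)))=(1-t)^{mn-N}\,\HS(\Rees(I_m(Y)))$. Since $\Rees(I_m(Y))$ is Cohen--Macaulay of dimension $mn+1$, this identity shows that the $mn-N$ linear forms $W$ form part of a homogeneous system of parameters, hence a regular sequence, and simultaneously that the displayed surjection is an isomorphism. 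Controlling the associated primes of the powers, via Lemma \ref{lemass} and Corollary \ref{asspro}, is what guarantees that suitable linear forms remain non-zerodivisors throughout, and thus supplies the genericity needed to run the specialization cleanly; establishing the acyclicity of the specialized complexes and the resulting regular-sequence property is the technical heart of the argument.
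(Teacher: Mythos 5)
Your proposal follows the same two-step route as the paper. Part (1) is exactly the paper's argument: the height hypothesis is the grade condition of Theorem \ref{ABW} (heights and grades agree over $S$), and the complexes $\YY(k,X)$ are linear because the entries of $X$ are linear forms. For part (2), the paper also reduces to the generic matrix $Y$ and also concludes via the ASL/quadratic Gr\"obner basis result \cite[Thm.9.14]{BV}, so the skeleton is identical; the one divergence is the specialization step --- that $\Rees(I_m(X))$ is the quotient of $\Rees(I_m(Y))$ by a regular sequence of linear forms --- which the paper simply cites from Eisenbud--Huneke \cite[Thm.3.5]{EHu}, and which you instead attempt to prove. Reproving it is legitimate, but your argument for this step (which you yourself call the technical heart) is miswired.

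The gap is in the sentence claiming that the Hilbert series identity $\HS(\Rees(I_m(X)))=(1-t)^{mn-N}\,\HS(\Rees(I_m(Y)))$ plus Cohen--Macaulayness ``simultaneously'' yields that $W$ is a regular sequence and that the surjection $\Rees(I_m(Y))/W\Rees(I_m(Y))\to\Rees(I_m(X))$ is an isomorphism. It does not, for two reasons. First, for $r\ge 2$ linear forms there is no coefficientwise inequality $\HS(M/(w_1,\dots,w_r)M)\succeq(1-t)^r\HS(M)$ (take $M=K$: then $\HS(M/(w_1,w_2)M)=1$ while $(1-t)^2\HS(M)=1-2t+t^2$), so equality of series cannot be parlayed into regularity the way it can for a single form. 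Second, to run ``part of an h.s.o.p.\ on a Cohen--Macaulay ring, hence a regular sequence'' you need the upper bound $\dim \Rees(I_m(Y))\otimes_T S\le N+1$, and this is not automatic: the Rees ring is a blow-up algebra and fibers of blow-ups can jump in dimension, while the surjection onto $\Rees(I_m(X))$ only gives the lower bound --- its kernel is exactly what you are trying to kill. The repair uses an ingredient you already listed, applied in the right order: acyclicity of $\YY(k,Y)\otimes_T S=\YY(k,X)$ gives $\Tor_1^T(T/I_m(Y)^k,S)=0$, i.e.\ $I_m(Y)^k\cap(W)=(W)\,I_m(Y)^k$ for all $k$, and this says precisely that the surjection is an isomorphism. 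Only then do you know $\dim\Rees(I_m(Y))/W\Rees(I_m(Y))=N+1$, whence $W$ is part of a homogeneous system of parameters of the Cohen--Macaulay ring $\Rees(I_m(Y))$, hence a regular sequence, and Koszulness descends. Two smaller points: Cohen--Macaulayness of the generic Rees algebra is true but needs a citation (it is in \cite{EHu}, and is how their Theorem 3.5 is proved); and your closing appeal to Lemma \ref{lemass} and Corollary \ref{asspro} is out of place --- those are used in the proof of Theorem \ref{mainThm}, not here, and no genericity of the linear forms $W$ is available or needed, since they are forced on you as the kernel of $\pi$.
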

\begin{proof} The first assertion is a special case of  Theorem \ref{ABW} because the complexes $\YY(k,X)$ are linear when the entries of $X$ are linear.
The second assertion follows from the result of Eisenbud and Huneke \cite[Thm.3.5]{EHu} since, under the assumption of Proposition \ref{ABWEH}, the Rees algebra of $I_m(X)$ is a quotient of the generic one by a regular sequence of linear forms. (The generic one is the Rees algebra of the ideal of maximal minors of a $m\times n$ matrix of distinct variables over the base field). Hence it is enough to prove the statement for a matrix of variables. The Rees algebra of the ideal of minors of a matrix of variables is a homogeneous ASL \cite[Thm.9.14]{BV} and hence it is defined by a Gröbner basis of quadrics. Therefore it is Koszul.
\end{proof}

Our main result is the following:

\begin{theorem}
\label{mainThm}
Let $X$ be a $m\times n$ matrix with $m\leq n$ whose entries are linear forms in a polynomial ring $S$ over a field $K$.
Suppose that $\height I_m(X) \geq n-m+1$ and $\height I_j(X)\geq \min\{ (m+1-j)(n-m)+1, N \}$ for every $j=2,\dots,m-1$ where $N=\height I_1(X)$. Then:
\begin{itemize}
\item[(1)] $I_m(X)$ has linear powers and it is of fiber type.
\item[(2)] The Betti numbers of $I_m(X)^k$ depends only on $m,n,k$ and $N$.
\end{itemize}
\end{theorem}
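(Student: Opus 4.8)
The plan is to prove the theorem by descending induction on $N=\height I_1(X)$, using Proposition \ref{ABWEH} for the base case and the reduction Lemma \ref{palo} for the inductive step. Since $X$ has only $mn$ entries we always have $N\le mn$, so $N$ ranges over a finite set and the induction is well founded. As a preliminary normalization, after a linear change of coordinates the entries of $X$ involve only $x_1,\dots,x_N$, so $I_m(X)$ is extended from $\tilde S=K[x_1,\dots,x_N]$; passing to a polynomial ring over this base changes neither the graded Betti numbers of the powers, nor the property of having linear powers, nor the fiber type of $\Rees$, so I may assume $\height I_1(X)=\dim S=N$. The base case is $N\ge m(n-m)+1$: then $\min\{(m+1-j)(n-m)+1,N\}=(m+1-j)(n-m)+1$ for every $j$ and $\height I_1(X)=N\ge m(n-m)+1$, so all hypotheses of Proposition \ref{ABWEH} hold. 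Hence $I_m(X)$ has linear powers and $\Rees(I_m(X))$ is Koszul; being Koszul it is defined by quadrics, and since $P(I_m(X))$ contains no form of bidegree $(*,0)$, its quadratic generators lie in bidegrees $(1,1)$ and $(0,2)$, i.e.\ in $Q(I_m(X))+T(I_m(X))$, so $I_m(X)$ is of fiber type. In this range the resolution of $I_m(X)^k$ is $\YY(k,X)$ by Theorem \ref{ABW}, whose ranks depend only on $m,n,k$; this settles (2) for the base case.

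For the inductive step I assume the theorem at level $N+1$ and let $X$ satisfy the hypotheses with $\height I_1(X)=\dim S=N\le m(n-m)$. I would lift $X$ by one variable: set $S^+=S[w]$ and $X^+=X+wC$ with $C=(c_{ij})$ a generic scalar matrix, so that $X=X^+\bmod w$. The first task is to verify the hypotheses at level $N+1$. Clearly $\height I_1(X^+)=N+1$ and $\height I_m(X^+)\ge\height I_m(X)\ge n-m+1$. For $2\le j\le m-1$ the key point is that a generic perturbation in a fresh variable raises the height of $I_j$ to the largest value the ambient dimension allows: a standard dimension count on determinantal loci (the affine image of $x\mapsto X(x)+C$ avoids, as far as dimension permits, the locus of rank $<j$ matrices) shows that for generic $C$ one has $\height I_j(X^+)=\min\{(m-j+1)(n-j+1),\,N+1\}$. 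Using $(m-j+1)(n-j+1)=(m+1-j)(n-m)+(m-j+1)^2$ with $(m-j+1)^2\ge 4$, this is $\ge\min\{(m+1-j)(n-m)+1,\,N+1\}$, so $X^+$ satisfies the level-$(N+1)$ hypotheses; by induction $I_m(X^+)$ has linear powers, is of fiber type, and has Betti numbers depending only on $m,n,k,N+1$.

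Next I would descend via Lemma \ref{palo} with $z=w$, which requires $(I_m(X^+)^k:w)/I_m(X^+)^k$ to have finite length for all $k$, i.e.\ that $w$ avoid every non-maximal associated prime of every power. To control these primes I would apply Corollary \ref{asspro} to $X^+$ with $p$ chosen as the largest index for which $\grade I_p(X^+)$ fails to reach $(m+1-p)(n-m)+2$; the height computation above forces such an $I_p(X^+)$ to be $\mm_{S^+}$-primary, so $\{P:P\supseteq I_p(X^+)\}=\{\mm_{S^+}\}$ and the corollary yields $\Ass(S^+/I_m(X^+)^k)\subseteq\Min(I_m(X^+))\cup\{\mm_{S^+}\}$. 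Since $I_m(X^+)$ is perfect of grade $n-m+1$ it is unmixed and equidimensional, and as $V(I_m(X))$ has strictly smaller dimension than $V(I_m(X^+))$ no component of the latter lies in $\{w=0\}$; hence $w$ lies in no prime of $\Min(I_m(X^+))$, and the finite length condition holds. Lemma \ref{palo} then transfers linear powers and fiber type from $X^+$ to $X$, and its Betti number formula (with $N+1$ in place of $n$), $\beta_i(J^k)=\beta_i(I_m(X^+)^k)-\binom{N}{i}\beta_{N}(I_m(X^+)^k)$, expresses the Betti numbers of $I_m(X)^k$ through those at level $N+1$ and the variable count, so they depend only on $m,n,k,N$. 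This closes the induction.

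The step I expect to be the main obstacle is the construction and verification of the lift $X^+$. One must show that a single generic perturbation in a new variable simultaneously raises the heights of all the $I_j$ to the capped generic values $\min\{(m-j+1)(n-j+1),N+1\}$ (so that the level-$(N+1)$ hypotheses hold and some $I_p(X^+)$ becomes $\mm_{S^+}$-primary) while keeping $w$ \emph{general} for the powers of $I_m(X^+)$. Both facts rest on the same semicontinuity analysis of determinantal loci, and the delicate point is reconciling ``general for $I_m$'' with ``special enough to cap the lower $I_j$''. The height hypotheses of the theorem are precisely what make Corollary \ref{asspro} applicable with a $p$ for which $I_p(X^+)$ is $\mm_{S^+}$-primary, and this is the mechanism that guarantees the required genericity of $w$.
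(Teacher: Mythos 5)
Your overall strategy coincides with the paper's: descending induction on $N=\height I_1(X)$, a one-variable generic perturbation $X^+=X+wC$ to raise $\height I_1$ to $N+1$, Corollary \ref{asspro} to confine $\Ass(S^+/I_m(X^+)^k)$ to $\Min(I_m(X^+))\cup\{\mm_{S^+}\}$, and Lemma \ref{palo} to descend back to $X$; your base case (including the observation that Koszul implies quadratic, and that quadratic generators of $P(I)$ must lie in bidegrees $(1,1)$ and $(0,2)$ since $P(I)$ contains no $(*,0)$-forms, hence fiber type) is fine. But the pivot of the argument --- the step you yourself flag as ``the main obstacle'' --- is wrong as you state it. You claim that for generic $C$ one has $\height I_j(X+wC)=\min\{(m-j+1)(n-j+1),\,N+1\}$ for every $j$. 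This is impossible in general: from $I_j(X+wC)\subseteq I_j(X)+(w)$ one gets $\height I_j(X+wC)\le \height I_j(X)+1$, so a single new variable can raise each height by at most one. The hypotheses of the theorem allow $\height I_j(X)$ to be as small as $(m+1-j)(n-m)+1$, which can be strictly below $N$; for such $j$ your claimed value exceeds this trivial upper bound (e.g.\ $m=3$, $n=5$, $N=6$, $\height I_2(X)=5$: you claim $\height I_2(X^+)=7$, but $\height I_2(X^+)\le 6$). The correct statement is the paper's Lemma \ref{yA}: $\height I_j(X+yA)=\min\{(m+1-j)(n+1-j),\,\height I_j(X)+1\}$, and it is not a routine semicontinuity fact --- the paper proves it by choosing $A$ outside the finitely many joins $L_j*V_j$ of the span of the coefficient matrices with the rank-$(<j)$ locus, using the dimension formula $\dim L_j*V_j=\dim L_j+\dim V_j+1$. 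Your ``standard dimension count'' gestures at this but proves neither your formula (which is false) nor the correct one.

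The error propagates into both places where you use the formula, though each can be repaired with the correct version, essentially as the paper does. For the level-$(N+1)$ hypotheses: $\height I_j(X^+)\ge\min\{(m+1-j)(n+1-j),\,(m+1-j)(n-m)+2,\,N+1\}$, which is $\ge\min\{(m+1-j)(n-m)+1,\,N+1\}$. For the $\mm_{S^+}$-primariness of $I_p(X^+)$ at your failing index $p$: if $\height I_p(X^+)<(m+1-p)(n-m)+2$, then since $(m+1-p)(n+1-p)=(m+1-p)(n-m)+(m+1-p)^2\ge(m+1-p)(n-m)+4$, the minimum in Lemma \ref{yA} must be attained by $\height I_p(X)+1$; hence $\height I_p(X)<(m+1-p)(n-m)+1$, and the theorem's hypothesis then forces $\height I_p(X)=N$, so $\height I_p(X^+)=N+1$ and $I_p(X^+)$ is indeed $\mm_{S^+}$-primary. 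So the skeleton of your induction is sound and matches the paper, but as written the proposal has a genuine gap: the perturbation lemma it rests on is false as stated, and the true statement requires the join-variety argument you omitted.
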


To prove Theorem \ref{mainThm} we need a sort of deformation argument.

\begin{lemma}
\label{yA}
Let $X$ be a $m\times n$ matrix whose entries are linear forms in a polynomial ring $S$ over an infinite field $K$. Let $y$ be a new variable. Then:
\begin{itemize}
\item[(1)] For every $A\in M_{mn}(K)$ and for every $j$ one has
$$ \height I_j(X)\leq \height I_j(X+yA)\leq 1+\height I_j(X).$$
\item[(2)] There exists $A\in M_{mn}(K)$ such that $$\height I_j(X+yA)= \min\{ (m+1-j)(n+1-j), \height I_j(X)+1\}$$ for every $j$.
\end{itemize}
\end{lemma}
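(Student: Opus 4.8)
The plan is to prove part~(1) by elementary dimension theory, valid for every $A$, and to obtain the sharp value in part~(2) by adjoining the entries of $A$ as new indeterminates, computing the height in the enlarged ring, and then specializing. Throughout I use that in a polynomial ring $R$ over a field one has $\height I=\dim R-\dim R/I$. For the lower bound in~(1), reducing modulo $y$ gives $I_j(X+yA)+(y)=I_j(X)+(y)$ in $S[y]$, and since $y$ is a nonzerodivisor on $(S/I_j(X))[y]$ we have $\height(I_j(X)+(y))=\height I_j(X)+1$; as adjoining one element raises height by at most one, $\height I_j(X)+1=\height(I_j(X+yA)+(y))\le\height I_j(X+yA)+1$. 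For the upper bound, $V(I_j(X+yA))$ contains $V(I_j(X+yA))\cap V(y)=V(I_j(X)+(y))$, which has dimension $\dim S/I_j(X)$, so $\dim S[y]/I_j(X+yA)\ge\dim S/I_j(X)$, i.e.\ $\height I_j(X+yA)\le\height I_j(X)+1$. Neither step uses any property of $A$.

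For part~(2), set $c_j=(m+1-j)(n+1-j)$ and $h_j=\height I_j(X)$. By part~(1) together with the universal bound $\height I_j\le c_j$ for an $m\times n$ matrix, every $A$ satisfies $\height I_j(X+yA)\le\min\{c_j,h_j+1\}$, so the issue is to realise this value for a single $A$. I would enlarge the ring to $\bar S=S[y,(a_{ij})]$, regarding the $a_{ij}$ as indeterminates, and consider the evaluation map $\Psi$ from $\operatorname{Spec}\bar S$ to the space $M_{mn}$ of $m\times n$ matrices, $(x,y,a)\mapsto X(x)+y\,a$, where $X(x)$ denotes $X$ evaluated at $x$. Then $\mathcal Z:=V(I_j(X+yA))=\Psi^{-1}(D)$, with $D\subseteq M_{mn}$ the locus of matrices of rank $<j$, which has codimension $c_j$.

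The height over $\bar S$ is then computed stratum by stratum. On $\{y=0\}$ the map recovers $V(I_j(X))$ times the full space of $a$'s, of dimension $(\dim S-h_j)+mn$. On $\{y\neq0\}$ the partial derivatives of $\Psi$ in the $a_{ij}$-directions are $y$ times the matrix units and hence span; so $\Psi$ is smooth and surjective there, and $\Psi^{-1}(D)$ has codimension exactly $c_j$, i.e.\ dimension $(\dim S+1+mn)-c_j$. Therefore $\dim\mathcal Z=mn+\max\{\dim S+1-c_j,\ \dim S-h_j\}$ and $\height_{\bar S}I_j(X+yA)=\min\{c_j,h_j+1\}$.

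Finally I descend to a scalar matrix through the projection $p\colon\mathcal Z\to M_{mn}$ forgetting $x$ and $y$, whose fibre over $\alpha\in M_{mn}(K)$ is $V(I_j(X+y\alpha))\subseteq\operatorname{Spec}S[y]$. By part~(1) every such fibre has dimension at least $g:=\dim\mathcal Z-mn$; on the other hand, if the top-dimensional component of $\mathcal Z$ dominates $M_{mn}$ then the generic fibre has dimension exactly $g$, and by upper semicontinuity the locus where the fibre dimension is $\le g$ is dense open and defined over $K$. Combining the two bounds, the fibre over any $\alpha$ in this open set has dimension precisely $g$, whence $\height I_j(X+y\alpha)=(\dim S+1)-g=\min\{c_j,h_j+1\}$; since $K$ is infinite the set has a $K$-point, and intersecting over $j=1,\dots,m$ gives one $A$ good for all $j$. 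The main obstacle is the dominance claim, which is nontrivial exactly when the $\{y\neq0\}$ stratum is top-dimensional, i.e.\ when $c_j\le h_j$. There I would argue by a generic-translate (Kleiman/Bertini) intersection, using the inequality $h_j\le\dim W$, where $W\subseteq M_{mn}$ is the image of the linear map $x\mapsto X(x)$: since $V(I_j(X))$ is the preimage of $D\cap W$ and $0\in D\cap W$, one has $h_j\le\dim W$, so $c_j\le h_j\le\dim W$ forces a generic translate of $D$ to meet $W$, the stratum dominates, and $g$ is genuinely the generic fibre dimension.
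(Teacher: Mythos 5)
Your strategy for part (2) --- adjoin the entries of $A$ as indeterminates, compute the height of $I_j(X+ya)$ in $\bar S=S[y,a_{ij}]$ by stratifying into $\{y=0\}$ and $\{y\neq 0\}$, then specialize to a general $K$-point --- is genuinely different from the paper's. The paper stays over $S$: it restricts attention to the set $T=\{j: h_j<c_j\}$, chooses coordinates so that modulo a system of parameters $E_j$ for $S/I_j(X)$ the variety $V(I_j(X))$ becomes the origin of a coordinate subspace, and then picks $A$ outside the joins $L_j*V_j\subseteq\PP(M_{mn}(K))$ of the span $L_j$ of the truncated coefficient matrices with the rank-$<j$ locus $V_j$, using the join dimension formula $\dim L_j+\dim V_j+1$. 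Your part (1) is correct (the paper proves the left inequality via initial forms for the weight giving $y$ weight $0$; your ``reduce mod $y$'' argument is an equally valid substitute), and your computation $\height_{\bar S}I_j(X+ya)=\min\{c_j,h_j+1\}$ is also correct.

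The gap is in your descent step, and the patch you sketch for it would not work. You leave the conclusion conditional on the claim that the top-dimensional component of $\mathcal Z$ dominates $M_{mn}$, and you propose to prove this, when $c_j\le h_j$, by a generic-translate (Kleiman) argument. That argument is circular: for the additive group of translations of $M_{mn}$, Kleiman's theorem gives transversality of a general translate $D+g$ with $W$ but never nonemptiness, and since $D=-D$ the set of $g$ with $(D+g)\cap W\neq\emptyset$ is exactly $W+D$, so ``a general translate of $D$ meets $W$'' is literally the dominance statement you are trying to establish. Fortunately the obstacle is spurious, for two reasons. First, $h_j\le c_j$ holds for every matrix (the universal bound you already invoke), so your problematic case $c_j\le h_j$ forces $h_j=c_j$; there part (1) plus the universal bound give $c_j=h_j\le\height I_j(X+yA)\le c_j$ for \emph{every} $A$, and no genericity is needed --- this is exactly why the paper works only with $j\in T$. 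Second, when $h_j<c_j$ no dominance is needed either: decompose $\mathcal Z$ into irreducible components; a component that dominates $M_{mn}$ has general fibers of dimension equal to its own dimension minus $mn$, hence $\le g$, while a component that does not dominate has empty fiber over a general $\alpha$. Hence the set of $\alpha$ with $\dim p^{-1}(\alpha)\le g$ contains a dense open subset defined over $K$ --- which is also the correct replacement for your appeal to ``upper semicontinuity'', since fiber dimension is not upper semicontinuous on the target of a non-proper morphism. With these repairs your argument closes and is a legitimate alternative to the paper's proof.
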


\begin{proof} (1) The inequality on the right follows from the inclusion $I_j(X+yA)\subseteq (y)+I_j(X)$. For the other we consider the weight vector $w$ that gives weight $1$ to the variables of $S$ and weight $0$ to $y$. One has $\ini_w(I_j(X+yA))\supseteq I_j(X)$. Since the height does not change by taking ideals of the initial forms, we conclude that $\height I_j(X)\leq \height I_j(X+yA)$.

(2) The inequality $\leq$ follows from (1). Set $c=\dim S$, $h_j=\height I_j(X)$, $g_j=(m+1-j)(n+1-j)$ and $T=\{ j : h_j<g_j\}$. By virtue of (1) and since $\height I_j(Y)\leq g_j$ holds for every $m\times n$ matrix $Y$, it is enough to show that there exists $A\in M_{mn}(K)$ such that $\height I_j(X+yA)=h_j+1$ for every $j\in T$.
We choose a system of coordinates $x_1,\dots,x_c$ for $S$ such that for every $j$ the set
$E_j=\{ x_k : k>h_j\}$ is a system of parameters for the ring $S/I_j(X)$. This can be done because the base field $K$ is infinite. It follows that $I_j(X)$ has height $h_j$ also modulo $(E_j)$. We may write $X=\sum_{i=1}^c x_i A_i$ with $A_i\in M_{mn}(K)$.
Since $X=\sum_{i=1}^{h_j} x_i A_i$ mod $(E_j)$, we deduce that $\height I_j(\sum_{i=1}^{h_j} x_i A_i)=h_j$, that is, the radical of $I_j(\sum_{i=1}^{h_j} x_i A_i)$ is $(x_1,\dots,x_{h_j})$, for every $j$.

Again by virtue of (1), it is enough to show that there exists $A\in M_{mn}(K)$ such that $\height I_j(\sum_{i=1}^{h_j}x_iA_i+yA)=h_j+1$ for every $j\in T$.
We consider the subvariety $V_j$ of the projective space $\PP(M_{mn}(K))$ of the matrices of rank $<j$. Furthermore we consider the linear space $L_j=\langle A_i : i=1,\dots, h_j\rangle\subseteq \PP(M_{mn}(K))$. By construction $L_j\cap V_j=\emptyset$. Consider then the join $L_j*V_j$ of $L_j$ and $V_j$, that is $L_j*V_j=\cup \overline{BC}$ where $\overline{BC}$ is the line joining $B$ and $C$ and $(B,C)$ varies in $L_j\times V_j$. It is well know that $L_j*V_j$ is a projective variety and that
$$\dim L_j*V_j= \dim L_j+ \dim V_j+1=mn-1-(g_j-h_j),$$
see for instance \cite[11.37]{Ha}.
Therefore $L_j*V_j$ is a proper subvariety of $\PP(M_{mn}(K))$ if $j\in T$, and so is $\bigcup_{j\in T} ( L_j*V_j )$. It follows that we can take $A\in M_{mn}(K)$ and $A\not\in L_j*V_j$ for every $j\in T$. We claim that with this choice of $A$ one has
$\height I_j(\sum_{i=1}^{h_j}x_iA_i+yA)=h_j+1$ for every $j\in T$ as desired. Suppose, by contradiction, that for a $j\in T$ one has $\height I_j(\sum_{i=1}^{h_j}x_iA_i+yA)<h_j+1$. Then there is a point $(a_1,\dots,a_{h_j},b)$ on the projective subvariety of $\PP^{h_j}$ defined by the ideal $I_j(\sum_{i=1}^{h_j}x_iA_i+yA)$ (where the last coordinate correspond to the variable $y$). In other words $\sum_{i=1}^{h_j}a_iA_i+bA$ has rank $<j$. If $b\neq 0$ we obtain that $A\in L_j*V_j$, a contradiction. If $b=0$ we have $(a_1,\dots,a_{h_j})\in L_j\cap V_j$ which is also a contradiction.
 \end{proof}

\begin{proof} [Proof of Theorem \ref{mainThm}]
(1) It is harmless to assume that the base field is infinite. Set $N= \height I_1(X)$. We may assume that $N=\dim S$. We have $N\leq mn$ and we do decrising induction on $N$. If $N=mn$ then we are in the generic case and the assertion is true because of Proposition \ref{ABWEH}.
Consider a new variable $y$, and set $R=S[y]$. In view of Lemma \ref{yA} we may take $A\in M_{mn}(K)$ such that the ideals of minors of the the matrix $Y=X+yA$ satisfy
\begin{equation}\label{eq1}
\height I_j(Y)=\min\{(m+1-j)(n+1-j), \height I_j(X)+1\}
\end{equation}
for every $j$. Hence $\height I_1(Y)=N+1$ and since, by assumption, we have $\height I_j(X)\geq \min\{(m+1-j)(n-m)+1, N\}$ we may deduce that
\begin{equation}\label{eq2}
\height I_j(Y)\geq \min\{(m+1-j)(n+1-j), (m+1-j)(n-m)+2, N+1 \}
\end{equation}
In particular,
\begin{equation}\label{eq3}
\height I_j(Y)\geq \min\{ (m+1-j)(n-m)+1, N+1 \}
\end{equation}
holds for every $j$. Hence, by induction, we may assume that $I_m(Y)$ has linear powers and is of fiber type. Since $R/I_m(Y)\otimes S=S/I_m(X)$ we conclude from Lemma \ref{reg} that $I_m(X)$ has linear powers and is of fiber type provided we show that
\begin{equation}\label{eq4}
(I_m(Y)^k:y)/I_m(Y)^k
\end{equation}
has finite length. Now \eqref{eq2} implies that
\begin{equation}\label{eq5}
\height I_j(Y)\geq \min\{ (m+1-j)(n-m)+2, N+1 \}
\end{equation}
for every $j\leq m-1$. Let $p$ be the largest number such that $(m+1-p)(n-m)+2>N+1$. Hence $\height I_j(Y)\geq (m+1-j)(n-m)+2$ for $j=p+1,\dots,m-1$. From Corollary \ref{asspro} we deduce that the associated primes of $I_m(Y)^k$ are either the minimal primes of $I_m(Y)$ or ideals containing $I_p(Y)$. But, by construction, $\height I_p(Y)=N+1$ and hence the radical of $I_p(Y)$ is the maximal homogeneous ideal $\mm_R$ of $R$. Summing up,
$$
\Ass(R/I_m(Y)^k)\subset \Min(I_m(Y)) \cup \{\mm_R\}.
$$

Since $\height I_m(Y)=\height I_m(X)$, the $1$-form $y$ does not belong to the minimal primes of $\height I_m(Y)$. We may hence conclude that $I_m(Y)^k:y$ is contained in the saturation $I_m(Y)^k:\mm_R^{\infty}$ of $I_m(Y)^k$ and that the module \eqref{eq4} has finite length.

Finally (2) follows from the construction above and Lemma \ref{palo}(2).
\end{proof}

As a special case of Theorem \ref{mainThm} we have:

\begin{corollary}
\label{ans}
Let $X$ be a $2\times n$ matrix with linear entries in a polynomial ring $S$. Assume $I_2(X)$ has height $n-1$. Then $I_2(X)$ has linear powers. In particular, any rational normal scroll has linear powers.
\end{corollary}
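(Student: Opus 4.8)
Corollary \ref{ans} follows from the main theorem by checking that the height hypotheses are automatically satisfied in the $m=2$ case, together with a classical fact about rational normal scrolls.

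The plan is to derive both assertions as immediate specializations of Theorem \ref{mainThm} to the case $m = 2$, where its hypotheses collapse. First I would set $m = 2$ and read off which conditions survive. The requirements on the lower-order minors, namely $\height I_j(X) \geq \min\{(m+1-j)(n-m)+1, N\}$ for $j = 2, \ldots, m-1$, are indexed over an empty set once $m = 2$, because then $m - 1 = 1 < 2$. Hence the only surviving hypothesis is $\height I_m(X) \geq n - m + 1$, which for $m = 2$ reads $\height I_2(X) \geq n - 1$.

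Next I would match this with the assumption of the corollary. Specializing the entries of a generic $2 \times n$ matrix to linear forms can only lower the codimension of the rank-one locus, so $\height I_2(X) \leq n - 1$ holds unconditionally; the hypothesis that $I_2(X)$ has height exactly $n - 1$ is therefore equivalent to the inequality $\height I_2(X) \geq n - 1$ just isolated. Thus Theorem \ref{mainThm} applies directly and yields that $I_2(X)$ has linear powers (and, in fact, is of fiber type), which is the first assertion. For the statement about scrolls I would invoke the determinantal description recalled in the introduction: every rational normal scroll is cut out by the $2$-minors of a $2 \times n$ matrix of linear forms whose maximal minors attain the expected height $n - 1$, so such a scroll falls under the first part.

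There is essentially no obstacle here, the whole statement being a pure specialization. The single point I would take care to state is that the range $j = 2, \ldots, m-1$ in Theorem \ref{mainThm} is empty when $m = 2$, so that no genericity on intermediate minors is required; and that the defining matrix of a scroll genuinely realizes the expected height $n - 1$ rather than degenerating below it. These two observations are what make the reduction work, and both are immediate from the setup.
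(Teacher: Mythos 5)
Your proof is correct and takes exactly the route the paper intends: the paper presents this corollary as an immediate special case of Theorem \ref{mainThm}, and your two observations --- that for $m=2$ the conditions on the intermediate minors $I_j(X)$, $j=2,\dots,m-1$, are vacuous so only $\height I_2(X)\geq n-1$ survives, and that rational normal scrolls are defined by the $2$-minors of a $2\times n$ matrix of linear forms of expected height $n-1$ --- are precisely the content of that specialization.
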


The following examples show that ideals of maximal minors with the expected codimension need not to have linear powers.

\begin{example} The ideal of $3$-minors of the following $3\times 5$ matrix has height $3$ and its square does not have a linear resolution.
$$X=\left(
\begin{array}{ccccc}
 x_1 & 0 &   0 & x_2 & x_4 \\
 0 &   0 & x_3 & x_2 & x_5 \\
 0 & x_2 & x_1 & x_3 & x_3
\end{array}
\right)
$$
The ideal of $2$-minors has only height $3$.
\end{example}

\begin{example} The ideal of $4$-minors of the following $4\times 5$ matrix has height $2$, and its square does not have a linear resolution.
$$X=\left(
\begin{array}{ccccc}
x_1&0&0&0&x_3 \\
0&x_2&0&0&x_4 \\
0&0& x_2&x_3&0 \\
0&0& x_1&x_4&x_3
\end{array}
\right)
$$
The ideal of $3$-minors has only height $2$.
 \end{example}

 On the other hand, the ideal of Example \ref{ex1} is the ideal of $3$-minors of a $3\times 4$ matrix, it has linear powers and it is not of fiber type. So it is an ideal of maximal minors with expected codimension that is not of the type described in Theorem \ref{mainThm}, but nevertheless has linear powers.

The irreducible varieties of minimal degree are the rational normal scrolls, the quadrics hypersurfaces, and the Veronese surface in $\PP^5$, \cite{EHa}. We have proved above that the rational normal scrolls have linear powers and for the quadric hypersurfaces that is obvious. A computer assisted proof that the Veronese surface in $\PP^5$ has linear powers is given in \cite{HHO}. We give below an alternative proof.

\begin{proposition}
\label{Ver}
The Veronese surface in $\PP^5$ has linear powers.
\end{proposition}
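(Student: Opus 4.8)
The plan is to realize the Veronese surface as the variety cut out by the ideal $I=I_2(X)$ of $2\times 2$ minors of a generic symmetric matrix
$$
X=\begin{pmatrix} x_1 & x_2 & x_3 \\ x_2 & x_4 & x_5 \\ x_3 & x_5 & x_6 \end{pmatrix}
$$
in $S=K[x_1,\dots,x_6]$, and to prove directly that $\reg(I^k)=2k$ for every $k$. Since $I$ is \emph{not} an ideal of maximal minors, Theorem \ref{mainThm} does not apply. Instead I would degenerate $I$ to a monomial ideal by a Gr\"obner degeneration and exploit the results on degree-$2$ monomial ideals recalled in the introduction.

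First I would fix a diagonal term order with $x_1>\dots>x_6$. By Conca's straightening-law analysis of the minors of a symmetric matrix the six generators of $I$ form a Gr\"obner basis, and a direct computation identifies the initial ideal as the squarefree monomial ideal
$$
J=\ini(I)=(x_1x_4,\ x_1x_5,\ x_1x_6,\ x_2x_5,\ x_2x_6,\ x_4x_6),
$$
which is the edge ideal of a graph $G$ on the six vertices, the variable $x_3$ being isolated. The complementary graph $\overline{G}$ turns out to be chordal (it admits a perfect elimination ordering obtained by successively deleting simplicial vertices), so by Fr\"oberg's criterion $J$ has a linear resolution. As $J$ is generated in degree $2$, the theorem of Herzog, Hibi and Zheng \cite{HHZ} quoted in the introduction (for degree-$2$ monomial ideals, linear resolution forces linear powers) yields $\reg(J^k)=2k$ for all $k$.

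To transfer this to $I$ I would use that regularity can only grow under a Gr\"obner degeneration, so $\reg(I^k)\le \reg(\ini(I^k))$, together with the trivial lower bound $\reg(I^k)\ge 2k$ coming from the fact that $I^k$ is generated in degree $2k$. Since $J^k\subseteq \ini(I^k)$ always holds, the desired equality $\reg(I^k)=2k$ follows once one knows
$$
\ini(I^k)=J^k \qquad\text{for every } k,
$$
which, because passing to an initial ideal preserves Hilbert functions, is equivalent to the assertion that the Hilbert functions of $S/I^k$ and $S/J^k$ agree for all $k$, i.e. that the Rees algebra $\Rees(I)$ degenerates to $\Rees(J)$ under the extended term order.

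This last point is the heart of the matter and the step I expect to be the main obstacle: establishing that the straightening relations among the minors, adjoined to the generators, constitute a Gr\"obner basis of the defining ideal of $\Rees(I)$, so that forming powers commutes with forming initial ideals. For ideals of minors of a symmetric matrix this is precisely the kind of statement the Gr\"obner-basis theory of determinantal Rees algebras is designed to produce, and I would either invoke such a result or certify it by hand in this $6$-variable case. As a bonus, the same quadratic Gr\"obner basis would show that $\Rees(I)$ is Koszul, giving the conclusion a second time through Proposition \ref{rees}; thus the only genuine work is producing and verifying that quadratic Gr\"obner basis of the Rees algebra.
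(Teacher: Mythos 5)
Your argument collapses at exactly the step you flagged as the heart of the matter: the equality $\ini(I^k)=J^k$ is \emph{false}, already for $k=2$, for the diagonal order you chose (the same computation defeats every diagonal order). Write the six minors as $f_1=x_1x_4-x_2^2$, $f_2=x_1x_5-x_2x_3$, $f_3=x_2x_5-x_3x_4$, $f_4=x_1x_6-x_3^2$, $f_5=x_2x_6-x_3x_5$, $f_6=x_4x_6-x_5^2$. A direct computation gives the identity
$$
f_2f_5-f_3f_4=x_3\det X ,
$$
so $x_3\det X\in I^2$ and hence $\ini(x_3\det X)=x_1x_3x_4x_6\in\ini(I^2)$. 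On the other hand every degree-$4$ monomial of $J^2$ is a product of two of the six generators of $J$, none of which involves $x_3$; therefore $x_1x_3x_4x_6\notin J^2$ and $\ini(I^2)\supsetneq J^2$. Equivalently, the Hilbert functions you would need to match do not match: $\dim_K(J^2)_4=20$ because of the coincidence $(x_1x_5)(x_2x_6)=(x_2x_5)(x_1x_6)$, whereas $\dim_K(I^2)_4=\dim_K\ini(I^2)_4=21$ (replace $f_3f_4$ by $x_3\det X$ in the list of the $21$ products: one gets $21$ elements with distinct initial terms). So the Rees algebra of $I$ does not degenerate to the Rees algebra of $J$, no Gr\"obner basis of the defining ideal of $\Rees(I)$ of the kind you hope for can exist for this order, and the bound $\reg(I^k)\le\reg(J^k)$ is unavailable. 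The steps before this point (the Gr\"obner basis for $I$ itself, chordality of the complementary graph, Fr\"oberg's criterion, and the Herzog--Hibi--Zheng theorem) are all correct, but the failure at the passage to powers is fatal. Incidentally, the same identities (together with $f_1f_4-f_2^2=x_1\det X$, $f_1f_6-f_3^2=x_4\det X$, etc., one for each variable) show that this is not an artifact of the order: it reflects the intrinsic fact that $\det X\in I^{(2)}\setminus I^2$ while $\mm\det X\subseteq I^2$.

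That last fact is precisely what the paper's proof confronts head-on instead of trying to avoid. There one takes a general hyperplane section, which turns $I^k$ into the $k$-th power of the ideal of the rational normal curve in $\PP^4$ --- already known to have linear powers by Corollary \ref{ans} --- so that $\reg(R/J^k)=2k-1$ for the section; the only remaining issue is $H^0_{\mm}(S/I^k)$, and this is controlled by Abeasis's primary decomposition $I^k=I^{(k)}\cap\mm^{2k}$, which shows $H^0_{\mm}(S/I^k)$ vanishes in degrees $\geq 2k$; Lemma \ref{reg} then gives $\reg(I^k)=2k$. If you wanted to salvage a Gr\"obner-theoretic proof you would have to work with the true initial ideals $\ini(I^k)$, which strictly contain $J^k$ (e.g.\ $\ini(I^2)\supseteq J^2+x_1x_4x_6\,\mm$), and bound their regularity directly --- at which point the approach has lost the simplicity that motivated it.
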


\begin{proof} The Veronese surface in $\PP^5$ is defined by the ideal $I\subset S=K[x_0,\dots,\allowbreak x_5]$ of the $2$-minors of the generic symmetric $3\times 3$ matrix.
Its general hyperplane section is the rational normal curve of $\PP^4$.
Hence if $y$ is a general linear form, $S/I^k\otimes S/(y)=R/J^k$ where $J$ in $R=K[x_0,\dots,x_4]$ defines the rational normal curve of $\PP^4$. So we know that $\reg(R/J^k)=2k-1$. And it remains to control $H_{\mm}^0(S/I^k)$.
But we know the primary decomposition of $I^k$,
$$I^k=I^{(k)}\cap \mm^{2k}.$$
This is proved by Abeasis in \cite[Thm.5.1,Cor.5.2]{A} in characteristic $0$. Using the ideas developed in \cite[Chap.10]{BV} and, in particular, \cite[Lem.10.10]{BV} one shows that the same result holds in arbitrary characteristics.
This implies that $H_{\mm}^0(S/I^k)$ vanishes in degrees $\geq 2h$. From Lemma \ref{reg} it then follow that $\reg(I^k)=2k$.
\end{proof}

The ideal $I_{n-1}(Y_n)$ of $(n-1)$-minors of a symmetric $n\times n$ matrix of variables $Y_n$ has a linear resolution. We have seen in Proposition\ref{Ver} that $I_2(Y_3)$ has linear powers. As observed in \cite[Ex.2.8]{C}, $I_3(Y_4)$ does not have linear powers because the resolution of $I_3(Y_4)^2$ is not linear. One can also check that $I_3(Y_4)^3$ has a linear resolution and so perhaps $I_3(Y_4)$ has linear powers with the exception of the second one, but this is another story.

\end{document}